\DeclareSymbolFontAlphabet{\Bbb}{AMSb}
\newlength{\fixboxwidth}
\newcommand{\COMMENT}[1]{}
\newcommand{\R}{\mathbb{R}}
\newcommand{\quark}{\setbox0\hbox{$x$}\hbox to\wd0{\hss$\cdot$\hss}}
\newcommand{\s}{\sigma}
\newtheorem{thm}{Theorem}[section]
\newtheorem{prop}[thm]{Proposition}
\newtheorem{lem}[thm]{Lemma}
\newtheorem{cor}[thm]{Corollary}
\theoremstyle{definition}
\newtheorem{defn}[thm]{Definition}
\newtheorem{rmk}[thm]{Remark}
\title{Qualitative Robustness in Bayesian Inference
}
\author{Houman Owhadi and Clint Scovel
\\
California Institute of Technology
}
\date{\today}
\renewcommand{\thefigure}{\arabic{section}.\arabic{figure}}
\renewcommand{\p@subfigure}{\thefigure}
\newcounter{mycount}
\begin{document}
\maketitle
\begin{abstract}
The practical implementation of Bayesian inference requires numerical approximation when closed-form expressions are not available. What types of accuracy (convergence) of the numerical approximations
 guarantee robustness and what types do not? In particular, is the recursive application of Bayes'
 rule robust when subsequent data or posteriors are approximated? When the prior is the push forward of a distribution by the map induced by the solution of a PDE, in which norm should that solution be approximated?
Motivated by such questions, we investigate the sensitivity of  the  {\em distribution} of posterior distributions
 (i.e.~of {\em posterior distribution}-valued random variables, randomized through the data) with respect to perturbations of the prior and data generating distributions in the limit when the number of data points grows towards infinity.
\end{abstract}

	

\section{Introduction and motivations}
When we apply Bayesian inference with Gaussian priors and linear observations, we do not actually compute  Bayes' rule but solve the linear system whose solution is the conditional expectation, and robustness is guaranteed by that of our linear solver.
This paper is motivated by robustness questions that  arise in the numerical implementation of Bayes' rule for continuous systems when closed-form expressions are not available for the computation of posterior values/distributions.
 For example,  what is the sensitivity of posterior values or the sensitivity of the distribution of posterior values to perturbations introduced by the numerical approximation of the prior?
 When  Bayes' rule is applied in a recursive manner and approximated posterior distributions are used as prior distributions or approximated data is used in the conditioning process, do we have robustness guarantees on subsequent posterior distributions/values? Is it possible to numerically approximate the optimal prior/mixed strategy of a decision theory problem (arising in the continuous setting under the complete class theorem \cite{Wald:1950}) when closed form expressions are not available, and if it is, in which metric should we do so?

	\begin{figure}[tp]
\begin{center}
			\includegraphics[width=\textwidth]{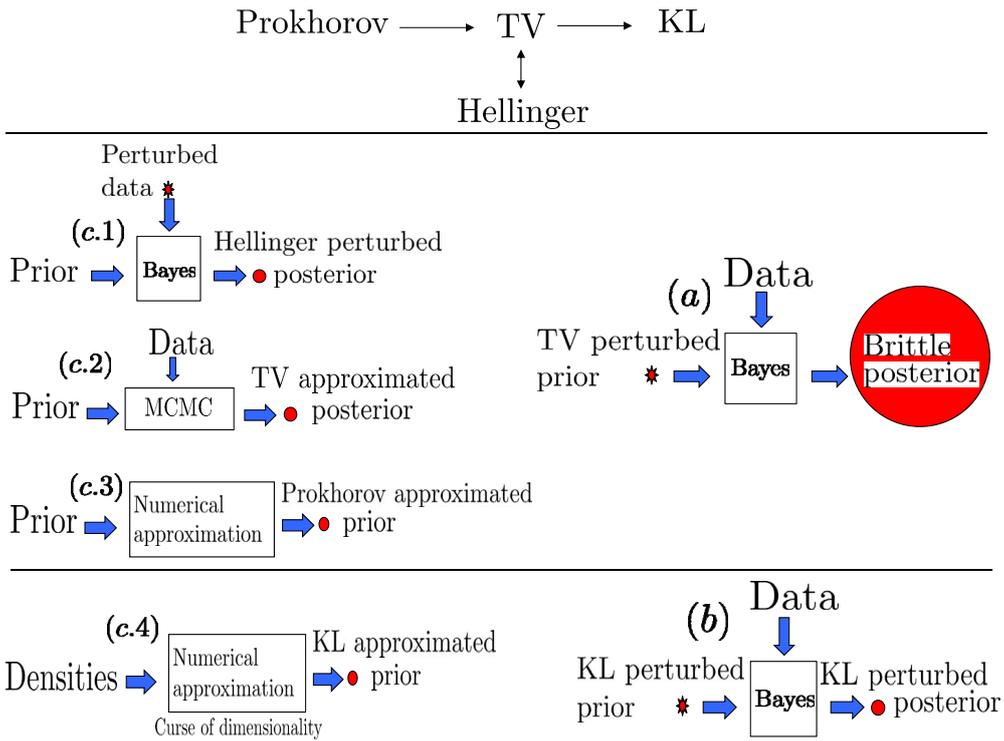}
		\caption{On the robustness of the Bayes' rule.
 The top panel illustrates relationships among probability metrics and,  as in \cite{Gibbs_Su},
  a directed arrow from A to B means that (for some function $h$) $d_A\leq h(d_B)$. The central panel illustrates
the generation of
non-robustness  and the bottom panel illustrates the generation of robustness.  }
		\label{figdragons}
\end{center}
	\end{figure}
 Figure  \ref{figdragons} provides an illustration of partial answers to such questions, which, when combined, can be used as a map to navigate robustness/non-robustness questions/issues arising from numerical approximations.
To begin, (a)
 \cite{OSS:2013,OwhadiScovel:2013,owhadi_shatters} shows that the range of posterior values of the quantity of interest under perturbations of the prior in Prokhorov or TV metrics is the deterministic range of the quantity of interest (we will refer to this maximal sensitivity property as brittleness).
Moreover, (c.1)  \cite{Stuart:2010} shows that the posterior distributions are controlled by the Hellinger metric under the application of the Bayes rule with an exact prior under the perturbation of a finite number of
 finite dimensional samples (see also \cite{ThanhGhattas14}).
  Since the Hellinger and TV metrics are equivalent \cite{Gibbs_Su},  (c.1) and (a) suggest that, without specific restrictions, it is, in general, not possible to offer guarantees on the robustness of recursive Bayes under perturbations of the data.
 Similarly (c.2) the convergence of Markov chains (on continuous state spaces) used in MCMC algorithms (such as the Metropolis algorithm) is generallly analyzed with respect to the TV topology \cite{Roberts2004, Gibbs04}. However
 it should be noted that, according to
Roberts and Rosenthal \cite[Pg.~10]{Roberts_mcmc}, Gibbs \cite{Gibbs04},  Gelman \cite[Intro]{Gelman_inference} and Madras and Sezer
 \cite[Intro]{Madras_quantitative}, convergence in TV is in general not guaranteed, so that one might say in general
that convergence of MCMC is  {\em at best} in TV.
 Therefore combining (c.2) and (a) suggests that, without further restrictions, it is, in general, not possible to offer guarantees on the robustness of recursive Bayes if posteriors are approximated using MCMC.
Moreover, in many cases,
 the prior may need to undergo a numerical approximation/discretization step prior to conditioning.
For example, in popular applications of Bayes' rule to stochastic PDEs \cite{Woodbury00, DashtiStuart2011pde} one pushes forward the prior from the space of coefficients of the PDE to the solution space where it is conditioned.
 Consequently, if
 the PDE is numerically approximated this implies an approximation of the pushed-forward prior. Therefore
 (c.3) representing  numerical discretization/approximation as a continuous map between two Polish spaces, it is known that the push forward of a measure under such maps is continuous in the weak topology, which is metrized by the Prokhorov distance. Therefore, combining (c.3) and (a) suggests that, without further restrictions, it is, in general, not possible to offer robustness guarantees to perturbations caused by the numerical discretization of the prior.

For positive results on the other hand, observe that
(b)  \cite{Gustafson_local} shows that posterior values (given a finite number of data points) are robust to approximation errors of the prior measured in Kullback-Leibler divergence. Therefore if (c.4) the numerical approximation map is continuous in  Kullback-Leibler divergence then posterior values (given a finite number of data points) are robust to that numerical discretization step. However observe that unless closed form expressions are available, one must keep track of densities to achieve the continuity of the numerical approximation map in Kullback-Leibler divergence, which is a task plagued by the curse of dimensionality.

Since the brittleness of posterior distributions and values with respect perturbations of the prior defined in TV or Prokhorov metrics
is an obstacle to obtaining robustness guarantees to numerical approximation errors when closed form expressions are unavailable, it is natural to ask whether robustness could be guaranteed by considering the {\em distribution} of posterior distributions or posterior values generated by the random generation of the data. To answer this question we develop a framework for quantifying the sensitivity of  the distribution of posterior distributions with respect to perturbations of the prior and data generating distributions in the limit when the number of data points grows towards infinity. In this generalization of  Cuevas' \cite{Cuevas}
extension  of Hampel's \cite{Hampel} notion of {\em qualitative robustness} to Bayesian inference
to include   both perturbations of the prior and the data generating distribution, posterior distributions are analyzed as measure-valued random variables (measures randomized through the data) and their robustness is quantified using the total variation, Prokhorov, and Ky Fan metrics.
Our results  show  that (1) the assumption that the prior has Kullback-Leibler support at the parameter value generating the data, classically used to prove consistency, can also be used to prove the non-robustness of posterior distributions with respect to infinitesimal perturbations (in TV) of the class of priors satisfying that assumption, (2) for a prior which has global Kullback-Leibler support on a  space which is not totally bounded, we can establish non-robustness   and (3)  consistency, and the unstable
nature of the conditions which generate it, produces non-robustness, and a careful selection of the prior is important if both properties (or their approximations) are to be achieved.
 The mechanisms supporting our results  are different and complementary to those discovered by Hampel   and developed by Cuevas.
To obtain them, we derive in Section \ref{sec_schwartz} a corollary to
 Schwartz' consistency Theorem that leads to robustness or non-robustness results depending on the topology defining the
continuity of the numerical approximation map (Kullback-Leibler, TV or Prokhorov). Moreover, this corrollary
is further developed in Proposition \ref{prop_schwartz} to analyze the convergence of random measures in
the qualitative robustness framework.
 More precisely,
although in \cite{Gustafson_local} it is shown that the Fr\'{e}chet derivatives of posterior values to Kullback-Leibler perturbations of the prior  may diverge to infinity with the number of data points, a simple application of Proposition \ref{prop_schwartz} implies the robustness of the distribution of posterior distributions/values under Kullback-Leibler perturbations of the prior in the limit where the number of data points goes to infinity. On the other hand,
  application of Proposition \ref{prop_schwartz} also suggest the lack of robustness of the distribution of posterior distributions/values to  TV or Prokhorov perturbations of the prior, where we note that Prokhorov perturbations include classes of perturbations defined by generalized moment constraints, as in \cite{ Betro94,Betro96,Betro00,Betro09, OSS:2013,OwhadiScovel:2013,owhadi_shatters}.

\section{Qualitative Robustness for Bayesian Inference}
Hable and Christmann \cite{Hable} have recently established
qualitative robustness for support vector machines. Consequently, it appears natural to inquire
into the qualitative robustness of Bayesian inference.
Hampel \cite{Hampel}  introduced the notion of the qualitative robustness of a sequence of estimators
and Cuevas \cite{Cuevas} has extended Hampel's definition and his basic structural results to Polish  spaces. Since the space $\mathcal{M}(\Theta)$ of priors and posteriors equipped with the weak topology
 is Polish whenever $\Theta$ is, Cuevas' extension has direct applications to Bayesian inference.
Boente et al.~\cite{Boente} have developed  qualitative robustness for stochastic processes,
 Nasser et al.~\cite{Nasser}
   for  estimation, and
 Basu et al.\cite{Basu_stability} for Bayesian inference with a single sample.
   The notion of qualitative robustness  introduce
 in this paper is
a straightforward
 generalization of that introduced by Hampel \cite{Hampel} and developed by
 Cuevas \cite{Cuevas}, see also  Cuevas \cite{gonzalez1984definicion}. Indeed,
 this version requires no introduction of loss and risk functions and concerns itself with not just  expected values but with the full distribution of the effects
of the randomness of the observations.  It considers a fixed model so is not concerned with
robustness with respect to model specification, although such considerations can easily be included. Moreover,
since it is formulated with respect to classic notions in probability,
 it appears to us as simple, natural, flexible, without calibration issues, and easy to interpret
  statistically.

Metrics on spaces of measures  and random variables will be important in its formulation. Fortunately,
this
 is a well studied field, see e.g.~Rachev et al.~\cite{Rachev:etal} and Gibbs and Su \cite{Gibbs_Su}, but to keep the presentation simple, here we will restrict our attention to the total variation,  Prokhorov
and Ky Fan metrics (we refer to  \cite{ Betro94,Betro96,Betro00,Betro09} for motivations for considering classes of priors defined in the Prokhorov metric).
For measurable spaces $\Theta$ and $X$, we write $\mathcal{M}(\Theta)$ and  $\mathcal{M}(X)$
for the set of probability distributions on $\Theta$ and  $X$ respectively.
In this  general setting,
we can metrize the spaces of measures $\mathcal{M}(X)$ and $\mathcal{M}(\Theta)$
using total variation. This latter metrization makes $\mathcal{M}(\Theta)$ into a topological space
whose Borel structure can be used to define
 the space
$\mathcal{M}^{2}(\Theta)$ of probability measures on $\mathcal{M}(\Theta)$, which can also be metrized
using the total variation. However, the separability of these spaces will be extremely useful for us and,
 in general, these spaces will not be separable under the total variation metric.
Recall that
for a metric space $(S,d)$, the Prokhorov metric $d_{Pr}$ on the space
$\mathcal{M}(S)$ of Borel probability measures is defined
by
\begin{equation}
\label{def_Pr}
  d_{Pr}(\mu_{1},\mu_{2}):=\inf{\bigl\{\epsilon: \mu_{1}(A) \leq \mu_{2}(A^{\epsilon})+\epsilon,\, A \in \mathcal{B}(S)\bigr\} },
\quad \mu_{1},\mu_{2}\in \mathcal{M}(S)\, ,
\end{equation}
where
\[ A^{\epsilon}:=\{x' \in S: d(x,x') < \epsilon \, \, \, \text{for some} \, \, \,  x \in A\} \,  .\]
According to Dudley \cite[Thm.~11.3.1]{Dudley:2002}, the Prokhorov metric is a metric
on $\mathcal{M}(S)$.
 Consequently, when $\Theta$ and $X$  are metric, ,
we can also metrize the spaces of measures $\mathcal{M}(\Theta)$
 and  $\mathcal{M}(X)$  with the Prokhorov metrics,
and  having done so we can
define the space
 $\mathcal{M}^{2}(\Theta):=\mathcal{M}(\mathcal{M}(\Theta))$
of Borel probability measures on the metric space
  $(\mathcal{M}(\Theta),d_{Pr\Theta})$ of Borel probability measures
on $\Theta$ and metrize it with the Prokhorov metric $d_{Pr^{2}\Theta}$.
 Furthermore, when
$(S,d)$ is a separable metric space,
Dudley \cite[Thm.~11.3.3]{Dudley:2002} asserts that
the Prokhorov metric metrizes weak convergence and
 Aliprantis and Border
  \cite[Thm.~15.12]{AliprantisBorder:2006}  asserts that the metric space
$(\mathcal{M}(S),d_{Pr})$ is separable.

 Therefore,
 when $X$ and $\Theta$ are
separable metric spaces,
the Prokhorov metrics $d_{PrX}$ and $d_{Pr\Theta}$ metrize weak convergence
 in $\mathcal{M}(X)$ and $\mathcal{M}(\Theta)$ respectively and both
metric spaces $(\mathcal{M}(X),d_{PrX})$  and $(\mathcal{M}(\Theta),d_{Pr\Theta})$ are separable.
Consequently, when $\Theta$ is a separable metric space,  $(\mathcal{M}(\Theta),d_{Pr\Theta})$ is a separable metric space and therefore
 $(\mathcal{M}^{2}(\Theta),d_{Pr^{2}\Theta})$  is a separable metric space.

The separability of $(\mathcal{M}(\Theta),d_{Pr\Theta})$ is sufficient to define
the Ky Fan metric on a space of $(\mathcal{M}(\Theta)$-valued random variables.
Indeed, for a separable metric space $S$, probability space $(\Omega, \Sigma,P)$,
and two  $S$-valued random variables $Z:\Omega \rightarrow S$ and $W:\Omega \rightarrow S$,    the Ky Fan distance between $Z$ and $W$,
 see e.g.~Dudley \cite[Pg.~289]{Dudley:2002}, is defined as
\begin{equation}
\label{def_kyfan}
\alpha (Z,W):=\inf\bigl\{\epsilon \geq 0: P(d(Z,W) >\epsilon) \leq \epsilon\bigr\}\, .
\end{equation}
By Dudley \cite[Thm.~9.2.2]{Dudley:2002},  the Ky Fan metric is a metric on
the space of  $S$-valued  random variables from $(\Omega,\Sigma,P)$ and
 metrizes convergence in probability for them.
  Consequently, when $\Theta$ is a separable metric space and
 $\mathcal{M}(\Theta)$ is metrized with the Prokhorov metric $d_{Pr\Theta}$,
 the Ky Fan  metric $\alpha$ of \eqref{def_kyfan} metrizes the space of $\mathcal{M}(\Theta)$-valued random variables
$Z:(X^{\infty},\mu^{\infty}) \rightarrow (\mathcal{M}(\Theta),d_{Pr\Theta})$ for each
$\mu\in \mathcal{M}(X)$. Since this family of metrics depends on the measure $\mu$ we indicate
this dependence by writing $\alpha_{\mu}$.
Moreover, when  $\Theta$ and $X$ are separable metric spaces,
their Borel $\s$-algebras are countably generated,
which is required  to apply Doob's Theorem to assert that a domninated measurable
model has a jointly measurable family of densities, which is required in the consistency theorem
of Schwartz which we will need.

When $\Theta$ and $X$ are Borel subsets of Polish metric spaces,
  they are separable metric spaces so the above applies. Let us now show that
  the assumption also facilitates the {\em  measurability} of Bayesian conditioning  that will be needed to
 define its qualitative robustness.   To that end,
from now on let us  place as default the weak topologies on  $\mathcal{M}(X)$, $\mathcal{M}(\Theta)$
and $\mathcal{M}^{2}(\Theta)$
and metrize them with the Prokhorov metrics $d_{PrX}$, $d_{Pr\Theta}$ and $d_{Pr^{2}\Theta}$. This is primarly to obtain
well-defined Bayesesian conditioning while at the same time applicability of Schwartz's consistency theorem.
We will also place other metric structures on $\mathcal{M}(X)$,  $\mathcal{M}(\Theta)$  and  $\mathcal{M}^{2}(\Theta)$ to quantify
the size of perturbations
and indicate them with the notation $d_{\mathcal{M}(X)}$, $d_{\mathcal{M}(\Theta)}$ and
 $d_{\mathcal{M}^{2}(\Theta)}$.
  Consider  a
 measurable model $P:\Theta \rightarrow \mathcal{M}(X)$. Since  Aliprantis and Border
 \cite[Thm.~15.13]{AliprantisBorder:2006} implies that  the map $\mathcal{M}(X)\rightarrow \R$ defined by
$\mu \mapsto \mu(A)$ is Borel measurable for all $A \in \mathcal{B}(X)$,   it follows that
$P$ corresponds to a Markov kernel.  Consider a prior $\pi \in \mathcal{M}(\Theta)$.  Then since
$\Theta$ is assumed to be a Borel subset of a Polish space, it follows from
 Schervish \cite[Thm.~B.46]{Schervish_theory} that there exists
a family $\pi_{x}, x\in X$  of conditional probability measures generated by the model $P$  such that
the map $x \mapsto  \int_{\Theta}{fd\pi_{x}}, x \in X$ is $\mathcal{B}(X)$-measurable for all
bounded and measurable functions $f:\Theta \rightarrow \R$.
Note that after the proof Schervish mentions that such a family of conditional measures is not unique.
 Since both $\Theta$ and $X$ are separable and metrizable,
it then follows from  Aliprantis and Border
 \cite[Thm.~19.7]{AliprantisBorder:2006} that the resulting map
$x \mapsto  \pi_{x}$ from $X$ to $\mathcal{M}(\Theta)$ is measurable.
For multiple samples,   it is clear that $X^{n}$
 is a Borel subset of the $n$-th power  of the ambient Polish space of $X$.
By Billingsly's \cite[Thm.~2.8]{Billingsley1} characterization of weak convergence on product spaces
it follows that
    the injection $\mathcal{M}(X) \rightarrow  \mathcal{M}(X^{n})$ defined by
$\mu \mapsto  \mu^{n}$ is continuous,
so that it follows that $P^{n}:\Theta \rightarrow  \mathcal{M}(X^{n})$,
defined by $P^{n}(\theta)=P(\theta)^{n}, \theta \in \Theta$,
 is measurable and therefore, by the same arguments as above, we obtain  a family of  multisample
conditional measures  $\pi_{x^{n}}, x^{n}\in X^{n}$  such that the resulting map
\[\bar{\pi}:X^{n}\rightarrow  \mathcal{M}(\Theta)\] defined by the determination of the posteriors
 \begin{equation}
\label{def_barpi}
\bar{\pi}(x^{n}):=\pi_{x^{n}},\quad x^{n} \in X^{n}
\end{equation}
is measurable.
Therefore,
its corresponding pushforward operator
\[\pi_{*}:\mathcal{M}(X^{n}) \rightarrow \mathcal{M}^{2}(\Theta)\, \]
is well-defined,
 where we have removed the bar over $\pi$ in the notation to emphasize that this pushforward
operator $\pi_{*}$ corresponds to the prior $\pi$.
Then to consider how the posteriors $\pi_{x^{n}}$  vary as a function of the sample data $x^{n}$
when it is generated by i.i.d.~sampling from $\mu$,
 since  $\mathcal{M}^{n}(X) \subset \mathcal{M}(X^{n}) $  it follows that
$\mu^{n} \in \mathcal{M}(X^{n}) $ so
we can utilize the pushforward operator $\pi_{*}$ to define
\begin{equation*}
\label{def_push}
  \pi_{*}\mu^{n} \in \mathcal{M}^{2}(\Theta)
\end{equation*}
the sampling distribution of the posterior distribution $\pi_{x^{n}}$ when
$x^{n}\sim \mu^{n}$.

For a fixed
 prior $\pi \in \mathcal{M}(\Theta)$, we say that the Bayesian inference
 is qualitatively robust at a data generating distribution $\mu\in \mathcal{M}(X)$  with respect to an admissible set $\mathcal{P}$ containing
 $\mu$,
and metrics $d_{\mathcal{M}(X)}$
and $d_{\mathcal{M}^{2}(\Theta)}$ on  $\mathcal{M}(X)$ and $\mathcal{M}^{2}(\Theta)$,   if for any $\epsilon > 0$, there exists a $\delta >0$ such that
\[  \acute{\mu} \in \mathcal{P},\,\, d_{\mathcal{M}(X)}(\mu,\acute{\mu}) < \delta \implies d_{\mathcal{M}^{2}(\Theta)}
(\pi_{*}\mu^{n},\pi_{*}\acute{\mu}^{n}) <\epsilon\]
for  large enough $n$.
On the other hand,  when the data generating distribution $\mu$ is fixed and we vary the prior $\pi$,
we consider the sequence
 of maps
\[\pi_{n}:X^{\infty} \rightarrow \mathcal{M}(\Theta)\]
defined by
\begin{equation}
\label{eq_dekjjibib}
\pi_{n}(x^{\infty}):=\pi_{x^{n}},\quad x^{\infty} \in X^{\infty}\,.
\end{equation}
Since the projection $P_{n}: X^{\infty}\rightarrow X^{n} $ is continuous and
$\pi_{n}=\bar{\pi}\circ P_{n}$, it follows from the measurability
of $\bar{\pi}:X^{n} \rightarrow \mathcal{M}(\Theta)$, that
$\pi_{n}$ is measurable, and therefore
 the resulting sequence
\[\pi_{n}:(X^{\infty},\mu^{\infty}) \rightarrow \mathcal{M}(\Theta)\, \]
is a sequence of  $\mathcal{M}(\Theta)$-valued random variables.
For each $\mu \in \mathcal{M}(X)$, let  $\alpha_{\mu}$  be a metric on the space  of
 $\mathcal{M}(\Theta)$-valued random variables
whose domain is the  probability space $(X^{\infty},\mu^{\infty})$.
Then,  for a prior $\pi \in \mathcal{M}(\Theta)$,
 we say that the Bayesian inference is qualitatively robust at $\pi$ with  respect to
an admissible set $\Pi\subset \mathcal{M}(\Theta)$ containing $\pi$  and metrics $\alpha_{\mu}$ and
$d_{\mathcal{M}(\Theta)}$ on $\mathcal{M}(\Theta)$
if
 given   $\epsilon>0$, there exists a $\delta >0$ such that
\[  \acute{\pi} \in \Pi,\,\, d_{\mathcal{M}(\Theta)}(\pi,\acute{\pi}) < \delta  \implies
\alpha_{\mu}(\pi_{n},\acute{\pi}_{n}) < \epsilon \,
\]
for large enough $n$.

These two definitions can be combined in a straightforward manner to define robustness corresponding
to a single prior/data generating pair. However,
to consider  a larger class of distributions than a single pair, we let
 $\mathcal{Z} \subset \bigl(\mathcal{M}(\Theta) \times \mathcal{M}(X)\bigr)^{2} $
denote the admissible set of prior-data generating distribution pairs  $\bigl((\pi,\mu),(\acute{\pi},\acute{\mu})\bigr)
\in  \bigl(\mathcal{M}(\Theta) \times \mathcal{M}(X)\bigr)^{2} $   such that
$(\pi,\mu) \in \mathcal{M}(\Theta) \times \mathcal{M}(X)$
is an admissible  candidate for robustness and
$(\acute{\pi},\acute{\mu}) \in \mathcal{M}(\Theta) \times \mathcal{M}(X)$ is an admissible candidate for its perturbation. In particular,
the projection $\mathcal{Z}_{1}\subset\mathcal{M}(\Theta) \times \mathcal{M}(X)$ denotes the set of admissible prior-data generating pairs.
Now combining  in a straightforward manner we obtain:
\begin{defn}
\label{def_qr_kyfan}
Let $X$ and $\Theta$ be Borel subsets of Polish metric spaces and let
$\mathcal{M}(X)$ and $\mathcal{M}(\Theta)$ be equipped with the weak topology metrized
by the Prokhorov metrics $d_{PrX}$ and $d_{Pr\Theta}$. Let
   $\mathcal{M}^{2}(\Theta):=\mathcal{M}(\mathcal{M}(\Theta))$ be the space
of Borel probability measures on the metric space
  $(\mathcal{M}(\Theta),d_{Pr\Theta})$ of Borel probability measures
on $\Theta$ equipped with its weak topology metrized by its Prokhorov metric
$d_{Pr^{2}\Theta}$. Consider perturbation pseudometrics $d_{\mathcal{M}(X)}$, $d_{\mathcal{M}(\Theta)}$ and
 $d_{\mathcal{M}^{2}(\Theta)}$  on $\mathcal{M}(X)$, $\mathcal{M}(\Theta)$ and
 $\mathcal{M}^{2}(\Theta)$ respectively and for each $\mu \in \mathcal{M}(X)$, let
   $\alpha_{\mu}$ be a pseudometric on  the space of $\mathcal{M}(\Theta)$-valued random variables
on the  probability space $(X^{\infty},\mu^{\infty})$.
Let
 $\mathcal{Z} \subset  \bigl(\mathcal{M}(\Theta) \times \mathcal{M}(X)\bigr)^{2} $
denote the admissible set of prior-data generating distribution pairs and
suppose that
$P:\Theta \rightarrow \mathcal{M}(X)$ is  measurable.
 Then the Bayesian inference is qualitatively robust with respect to $\mathcal{Z}$,
if
 given   $\epsilon_{1}, \epsilon_{2} >0$, there exists  $\delta_{1},\delta_{2} >0$ such that
\[ \bigl( (\pi,\mu), (\acute{\pi},\acute{\mu})\bigr) \in \mathcal{Z},\quad
  \,  d_{\mathcal{M}(\Theta)}(\pi,\acute{\pi}) < \delta_{1},
\,\, d_{\mathcal{M}(X)}(\mu,\acute{\mu}) < \delta_{2}
\]
\[   \implies
d_{\mathcal{M}^{2}(\Theta)}(\acute{\pi}_{*}\mu^{n},\acute{\pi}_{*}\acute{\mu}^{n}) < \epsilon_{1} \, , \quad
\alpha_{\mu}(\pi_{n},\acute{\pi}_{n}) < \epsilon_{2} \,
\]
for large enough $n$.
\end{defn}
Finite sample versions, as introduced
in Hable and Christmann \cite[Def.~2]{Hable_robustness}, are also available.
Note that unlike Hampel and Cuevas who require ``for all $n$" in their definitions, we  follow
 Huber \cite{HuberRonchetti:2009} and Mizera \cite{Mizera} in only requiring closeness ``for large enough $n$". The results
of this paper are applicable to both versions.
Of course the  relevance of the specific notion of qualitative robustness  used
 depends on the  perturbation metrics used.
 The results of this paper
apply to the case when
  $\alpha_{\mu}$  is the Ky Fan metric, metrizing convergence in probability
on  the space of $\mathcal{M}(\Theta)$-valued random variables with domain
the  probability space $(X^{\infty},\mu^{\infty})$ and
the $d_{\mathcal{M}(\Theta)}$ is any metric weaker than
 the total variation.

\section{Lorraine Schwartz' Theorem}
\label{sec_schwartz}
The fundamental mechanism generating non robustness for Bayesian inference will be its consistency.
The breakthrough in consistency for Bayesian inference is considered to be  Schwartz' theorem \cite[Thm.~6.1]{Schwartz:1965},
so we use it  as a model
for consistency and the conditions sufficient to generate it.
Stated
in Barron, Schervish and Wasserman \cite[Intro]{BarronEtAl:1999}, Wasserman \cite[Pg.~3]{Wasserman_asymptotic} and
 Ghosal, Ghosh and Ramamoorthi \cite[Cor.~1]{Ghosal_issues} for the nonparametric case,
for the parametric case
we will operate in
 Borel subsets of  Polish metric spaces.
 By Schervish  \cite[Thm.~B.32]{Schervish_theory},  regular conditional probabilities exist
for conditioning random variables with values in such a space. Moreover, when the parametric model
is a Markov kernel and is dominated by a $\s$-finite measure, then by the Bayes' Theorem for densities
Schervish \cite[Thm.~1.31]{Schervish_theory}  we have, in addition, that the Bayes' rule for densities determines
a  valid  family of densities for the
 regular conditional distributions.
We say that a model $P:\Theta \rightarrow \mathcal{M}(X)$  is dominated if there exists a $\s$-finite
Borel measure $\nu$ on $X$ such that $P_{\theta}  \ll \nu,\, \theta \in \Theta$.

Recall the Kullback-Leibler divergence $K$ between two measures $\mu_{1}$ and $\mu_{2}$ defined by
\[  K(\mu_{1},\mu_{2}):=\int{\log{\Bigl(\frac{d\mu_{1}}{d\nu}\Big/\frac{d\mu_{2}}{d\nu}\Bigr)}d\mu_{1}}\, ,
\]
where $\nu$ is any measure such that both $\mu_{1}$ and $\mu_{2}$ are absolutely continuous with respect to $\nu$.
It is well known that $K$ is nonnegative, and that  it is finite  only if $\mu_{1} \ll \mu_{2}$, and in that case
  $K(\mu_{1},\mu_{2})=\int{\log{\frac{d\mu_{1}}{d\mu_{2}}}d\mu_{1}}\, . $
From this we can define the Kullback-Leibler ball $K_{\epsilon}(\mu)$ of radius $\epsilon$ about $\mu \in \mathcal{M}(X)$ by
$K_{\epsilon}(\mu)=\{\mu'\in \mathcal{M}(X): K(\mu,\mu') \leq \epsilon\}$.
For a model $P:\Theta \rightarrow \mathcal{M}(X)$, there is the pullback to a function
$K$ on $\Theta$ defined by $K(\theta_{1},\theta_{2}):=K(P_{\theta_{1}},P_{\theta_{2}})$ and when the model is dominated
 by a $\s$-finite measure $\nu$, if we let
$p(x|\theta):=\frac{dP_{\theta}}{d\nu}(x), x \in X$ be a realization of the Radon-Nikodym derivative, then the
 pullback  has the form
\[K(\theta_{1},\theta_{2}):=\int{\log{\frac{p(x|\theta_{1})}{p(x|\theta_{2})}}dP_{\theta_{1}}(x)}\, .\]
From this we define a Kullback-Leibler neighborhood of a point $\theta \in \Theta$ by
\[ K_{\epsilon}(\theta):=\bigl\{\theta' \in \Theta: K(\theta,\theta')\leq \epsilon\bigr\}\,.\]
Let us define the set of  priors $\mathcal{K}(\theta)\subset \mathcal{M}(\Theta)$ which
have  Kullback-Leibler {\em support at $\theta$}
   by
\begin{equation}
\label{def_Ktheta}
\mathcal{K}(\theta):=\Bigl\{\pi \in  \mathcal{M}(\Theta): \pi\bigl( K_{\epsilon}(\theta)\bigr) >0,\quad \epsilon >0 \Bigr\}\,,
\end{equation}
which implicitly requires that $ K_{\epsilon}(\theta)$ be measurable\footnote{
Note  the change from the standard definition $K_{\epsilon}(\mu)=\{\mu': K(\mu,\mu')<  \epsilon\}$
to ours $K_{\epsilon}(\mu)=\{\mu': K(\mu,\mu') \leq \epsilon\}$
does not affect which measures
have Kullback-Leibler support, but is more convenient since then
$K_{\epsilon}(\mu)$ is closed, simplifying the  proof  that $K_{\epsilon}(\theta)$ is measurable.}
 for all $\epsilon >0$.
Also let $\mathcal{K}\subset \mathcal{M}(\Theta)$ denote those measures with
{\em global} Kullback-Leibler support, that is,
\begin{equation*}
\label{def_Kglobal}
\mathcal{K}:=\cap_{\theta \in \Theta}{\mathcal{K}(\theta)}
\end{equation*}
 is the set of priors
which have Kullback-Leibler support at all $\theta$, and
let $\mathcal{K}^{ae} \supset \mathcal{K}$, defined by
\begin{equation}
\label{def_Kaglobal}
\mathcal{K}^{ae}:=\Biggl\{\pi \in \mathcal{M}(\Theta): \pi\Bigl\{\theta \in \Theta:\pi\bigl(K_{\epsilon}(\theta)\bigr)>0,\,  \epsilon> 0\Bigr\}=1 \Biggr\}\, ,
\end{equation}
denote the set of priors with
{\em almost global} Kullback-Leibler support.

Let us address the measurability of the Kullback-Leibler
neighborhoods $ K_{\epsilon}(\theta) \subset \Theta,\, \epsilon >0$. For the nonparametric case,
Barron, Schervish and Wasserman \cite[Lem.~11]{BarronEtAl:1999}
demonstrate  that
the Kullback-Leibler neighborhoods $K_{\epsilon}(P_{\theta^{*}}) \subset \mathcal{M}(X)$ are measurable with respect to the strong topology
restricted  to the subspace of measures which are absolutely continuous with respect to a common $\s$-finite reference measure.
For the parametric case,
Dupuis and Ellis \cite[Lem.~1.4.3]{DupuisEllis}   assert that on a Polish space  that
 K is lower semicontinuous in both arguments. Since the subset embedding
$:X \rightarrow X'$  of a subset $X$ of a  metric space $X'$ is isometric,  when $X$ is a Borel subset
of a separable metric space $X'$, it can be shown  that the induced
pushforward map $i_{*}:\mathcal{M}(X) \rightarrow \mathcal{M}(X')$ is isometric in the Prokhorov metrics, in particular it is continuous.
 Since the composition of a continuous and a lower semicontinuous function is lower semicontinuous, it follows from Dupuis and Ellis \cite[Lem.~1.4.3]{DupuisEllis}  that  on any
realization of a standard Borel space that the Kullback-Leibler divergence is lower semicontinuous in each of its arguments
separately, in particular,  fixing the first, it is lower semicontinuous. Therefore
$K_{\epsilon}(P_{\theta^{*}}) \subset \mathcal{M}(X)$ is closed, and therefore measurable for $\epsilon >0$.
   Consequently, when
$P$  is measurable, it follows that
$K_{\epsilon}(\theta^{*})\subset \Theta$ is measurable for $\epsilon >0$.

The following corollary to Schwartz' Theorem, and its implications in Proposition \ref{prop_schwartz}, gives
us the form of consistency that we will use in the robustness analysis.
Since the $\s$-algebra
of a Borel subset of a Polish space is countably generated,
 Doob's Theorem in Dellacherie and Meyer \cite[Thm. V.58]{dellacherie1982probabilities} and the measurability
of the dominated model $P$  implies that
 a
  family $p(\theta), \theta \in \Theta$ of  densities can be chosen to be
$\mathcal{B}(X) \times \mathcal{B}(\Theta)$ measurable, so that
this assumption of Schwartz' Theorem \cite[Thm.~6.1]{Schwartz:1965} is satisfied.
We note that Dellacherie and Meyer emphasize that the countably generated condition is indispensable
for  Doob's Theorem  to apply.
  Note
the assumption that the map $P:\Theta \rightarrow P(\Theta)$ be open.
\begin{cor}[Schwartz]
\label{cor_schwartz}
Let $X$  and $\Theta$ be Borel subsets of Polish metric spaces and equip $\mathcal{M}(X)$ and  $\mathcal{M}(\Theta)$
 with the Prokhorov metrics. Consider  an injective measurable dominated model
$P:\Theta \rightarrow \mathcal{M}(X)$
such that $P:\Theta \rightarrow P(\Theta)$  is open.
Then for every
   $\pi \in \mathcal{M}(\Theta)$ with Kullback-Leibler support at $\theta^{*} \in \Theta$ and
for every  measurable neighborhood $U$ of $\theta^{*}$,
we have
\[ \pi_{x^{n}}(U) \rightarrow 1\quad  n\rightarrow \infty,\quad   a.e.~P^{\infty}_{\theta^{*}}\, .\]
\end{cor}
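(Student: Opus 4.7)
The plan is to reduce the statement to the classical nonparametric form of Schwartz' theorem applied in $\mathcal{M}(X)$ with its weak topology, and then transport the conclusion back to $\Theta$ through the injective open map $P$.

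First I would transfer the Kullback--Leibler support hypothesis to the pushforward. Because the pullback definition gives $K(\theta_{1},\theta_{2}) = K(P_{\theta_{1}},P_{\theta_{2}})$, the assumption $\pi \in \mathcal{K}(\theta^{*})$ says exactly that $P_{*}\pi$ charges every Kullback--Leibler ball about $P_{\theta^{*}}$ in $\mathcal{M}(X)$. Applying Schwartz' theorem in the form recorded in Barron--Schervish--Wasserman to the dominated identity model on $P(\Theta) \subset \mathcal{M}(X)$ with prior $P_{*}\pi$ then yields, for every weakly open neighborhood $V$ of $P_{\theta^{*}}$ in $\mathcal{M}(X)$,
\[ (P_{*}\pi)_{x^{n}}(V) \to 1 \quad \text{a.s.\ } P_{\theta^{*}}^{\infty}. \]
The measurability prerequisites (joint measurability of the density, existence of regular conditionals) are supplied by Doob's theorem and Schervish Thm.~B.32, since both ambient spaces are Borel subsets of Polish metric spaces.

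Next I would match neighborhoods on the two sides. A measurable neighborhood $U$ of $\theta^{*}$ contains, by definition, an open neighborhood $U' \subset U$ of $\theta^{*}$. Openness of $P : \Theta \to P(\Theta)$ produces a weakly open $V \subset \mathcal{M}(X)$ with $P(U') = V \cap P(\Theta)$, and injectivity of $P$ then forces $P^{-1}(V) = U'$. The crucial final identification is that the pushed-forward posterior $P_{*}\pi_{x^{n}}$ coincides with the posterior $(P_{*}\pi)_{x^{n}}$ of the pushed-forward prior under the identity model on $P(\Theta)$: this follows because Bayes' rule for densities (Schervish Thm.~1.31) gives explicit posterior densities on $\Theta$, and pushing these forward through the measurable bijection $P : \Theta \to P(\Theta)$ provides an admissible regular conditional distribution for $P_{*}\pi$ that, by uniqueness up to null sets, must agree with $(P_{*}\pi)_{x^{n}}$. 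Combining the three steps,
\[ \pi_{x^{n}}(U) \geq \pi_{x^{n}}(U') = \pi_{x^{n}}(P^{-1}(V)) = (P_{*}\pi)_{x^{n}}(V) \to 1 \quad \text{a.s.\ } P_{\theta^{*}}^{\infty}. \]

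The main obstacle is the final identification of the pushed-forward posterior with the posterior of the pushed-forward prior: it requires careful bookkeeping of null sets and verifying that $P_{*}\pi$ is supported on $P(\Theta)$ so that Borel sets outside $P(\Theta)$ contribute nothing. Once this is handled the rest is a clean pairing of openness with injectivity; the role of the openness hypothesis is precisely to lift open neighborhoods in $\Theta$ to weakly open sets in $\mathcal{M}(X)$ where the classical Schwartz machinery directly applies.
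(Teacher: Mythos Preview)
Your argument is correct, but it takes a different route from the paper's. The paper applies Schwartz' original parametric theorem \cite[Thm.~6.1]{Schwartz:1965} directly on $\Theta$: given the open $O \subset U$, it uses openness of $P$ to produce a weakly open $V_{*} \subset \mathcal{M}(X)$ with $V_{*} \cap P(\Theta) = P(O)$, invokes the standard fact that weakly open neighborhoods in $\mathcal{M}(X)$ admit uniformly consistent tests to obtain a test of $P_{\theta^{*}}$ against $\{P_{\theta} : \theta \in O^{c}\}$ (injectivity gives $O^{c} = P^{-1}(V_{*}^{c})$), and then uses Pinsker's inequality to fit a Kullback--Leibler ball $K_{\tau}(P_{\theta^{*}})$ inside $V_{*}$ whose preimage $W_{\tau}(\theta^{*}) \subset O$ serves as the set $W$ in Schwartz' hypotheses. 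Your route instead pushes the prior forward to $\mathcal{M}(X)$, applies the nonparametric black-box form of Schwartz there, and pulls the conclusion back through the identification $P_{*}(\pi_{x^{n}}) = (P_{*}\pi)_{x^{n}}$. The paper's approach avoids that posterior-identification step---and the attendant Lusin--Souslin bookkeeping needed to treat $P(\Theta)$ as a bona fide parameter space---at the cost of unpacking the test and KL-ball conditions by hand; your approach is conceptually cleaner but shifts the technical burden into checking that Bayes' rule commutes with pushforward by the injective Borel map $P$, which, as you note, is where the null-set accounting must be done carefully.
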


\section{Main Results}
\label{sec_main}
Now that we have defined \emph{qualitative robustness} for Bayesian inference and presented the
consistency conditions of Schwartz' Corollary \ref{cor_schwartz},
we are now prepared for our main results.   Indeed,  the brittleness results of
 \cite{OSS:2013,OwhadiScovel:2013,owhadi_shatters} and the
non qualitative robustness results of Cuevas \cite[Thm.~7]{Cuevas} suggest that we may obtain non
 qualitative robustness
according to
Definition \ref{def_qr_kyfan}
by fixing the prior and varying the data generating distribution.   However,  according to Berk \cite{Berk:1966},
in the misspecified case, although ``there need be no convergence (in any sense)'',
 in the limit the posterior becomes confined to a carrier  set consisting of those points which are closest in terms of the Kullback-Leibler divergence. Consequently, it appears possible that a generalization of
the results of Hampel \cite[Lem.~3]{Hampel} and  Cuevas \cite[Thm.~1]{Cuevas} which  allows such a  set-valued
notion of consistency may be sufficient. Certainly it will require the more sophisticated notions of the
continuity, or semi-continuity, of the Kullback-Leibler set-valued information projection and its dependence
on the geometry of the model class $P(\Theta) \subset \mathcal{M}(X)$.
Although this path will certainly be instructive and appears feasible,   we instead find it simpler
  to obtain non qualitative robustness
by
 fixing the data generating distribution to be in the model class and varying the prior.
In particular, we show that the inference is not robust according to Definition \ref{def_qr_kyfan}
 when the metric $\alpha_{\mu}$ is the
the Ky Fan metric and the metric $\mathcal{M}(\Theta)$ is any that is weaker than the total variation metric.
 It is important to note that
 these results do not require any misspecification. Moreover, it appears that Bayesian Inference's dependence
on both the data generating distribution and the prior leads to two complementary mechanisms generating
non qualitative robustness; whereas Cuevas' result \cite[Thm.~7]{Cuevas} utilizes consistency and the discontinuity
of the infinite sample limit, this other component utilizes the non-robustness
of consistency, namely that the set of consistency priors, those with Kullback-Leibler support at the data generating distribution, is not robust.

Now let us return to our main results.
For $\theta \in \Theta$,  let us recall from \eqref{def_Ktheta}  the set of priors
$\mathcal{K}(\theta) \subset \mathcal{M}(\Theta)$  with Kullback-Leibler support at $\theta$
and,  for  $\rho>0$, define a total variation uniformity
 $\Pi_{\rho}(\theta)\subset \mathcal{M}(\Theta) \times \mathcal{M}(\Theta) $
by
\begin{equation*}
\label{def_Pi}
 \Pi_{\rho}(\theta):=\{(\pi,\acute{\pi}) \in  \mathcal{M}(\Theta) \times \mathcal{M}(\Theta):
\pi \in  \mathcal{K}(\theta),\,\, d_{tv}(\pi,\acute{\pi}) < \rho\}\,
\end{equation*}
of prior pairs where the first component has Kullback-Leibler support at $\theta$ and
the second component is within $\rho$ of the first in the total variation metric.
For $\theta \in \Theta$, we define an admissible set of prior-data generating distribution pairs
 $\mathcal{Z}_{\rho}(\theta) \subset \bigl(\mathcal{M}(\Theta) \times \mathcal{M}(X)\bigr)^{2}   $
by
\begin{equation}
\label{def_Z}
\mathcal{Z}_{\rho}(\theta):= \Pi_{\rho}(\theta)\times P_{\theta}\times
P_{\theta} \, ,
\end{equation}
 using the identification  of
$\bigl(\mathcal{M}(\Theta) \times \mathcal{M}(X)\bigr)^{2}$ with $\mathcal{M}(\Theta)^{2}
\times  \mathcal{M}(X)^{2}$.

Our Main Theorem shows,  under the conditions of Schwartz' Corollary,  that the Bayesian inference is not robust
under the assumption that the prior has Kullback-Leibler support at the parameter value generating the data.
This result, along with those that follow,  supports  Cuevas' \cite{Cuevas2}  statement that ``his results
suggest the possibility of proving the instability (i.e.~the lack of qualitative robustness) for a wide class of usual Bayesian models."
\begin{thm}
\label{thm_main}
Consider Definition \ref{def_qr_kyfan} with the total variation metric $d_{\mathcal{M}(\Theta)}:=d_{tv}$ on
$\mathcal{M}(\Theta)$ and the Ky Fan metric $\alpha_{\mu}$
on  the space of $\mathcal{M}(\Theta)$-valued random variables with domain
the  probability space $(X^{\infty},\mu^{\infty})$.  Given the conditions of
Schwartz' Corollary \ref{cor_schwartz},
for all $\theta \in \Theta$
   the Bayesian inference is not qualitatively robust with respect to
 $\mathcal{Z}_{\rho}(\theta)$ for  all
  $\rho >0$.
\end{thm}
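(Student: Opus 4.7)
The plan is as follows. Since $\mathcal{Z}_{\rho}(\theta)=\Pi_{\rho}(\theta)\times P_{\theta}\times P_{\theta}$ fixes the data generating distribution at $\mu=\acute{\mu}=P_{\theta}$, the first conclusion of Definition \ref{def_qr_kyfan} is automatic (the two pushforwards $\acute{\pi}_{*}\mu^{n}$ and $\acute{\pi}_{*}\acute{\mu}^{n}$ coincide), so it suffices to exhibit, for arbitrary $\delta_{1}>0$, a pair $(\pi,\acute{\pi})\in\Pi_{\rho}(\theta)$ with $d_{tv}(\pi,\acute{\pi})<\delta_{1}$ for which $\alpha_{\mu}(\pi_{n},\acute{\pi}_{n})$ stays above a fixed $\epsilon_{2}>0$ for all large $n$. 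The guiding idea is to blend an $\eta$-fraction of a ``consistency-supplying'' mass at $\theta$ into a measure concentrated far from $\theta$: Schwartz' corollary forces $\pi_{n}$ to concentrate near $\theta$, while the support of $\acute{\pi}_{n}$ is trapped away from $\theta$ by the support-preservation of the Bayes formula.

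Pick $\theta'\in\Theta$ with $d_{\Theta}(\theta,\theta')>0$ (assuming $\Theta$ is non-degenerate), set $r:=d_{\Theta}(\theta,\theta')/3$, and for $\eta>0$ define
\[
\acute{\pi}:=\delta_{\theta'},\qquad \pi:=\eta\delta_{\theta}+(1-\eta)\delta_{\theta'}.
\]
Then $d_{tv}(\pi,\acute{\pi})=\eta$; and since $\theta\in K_{\epsilon}(\theta)$ for every $\epsilon>0$ we have $\pi(K_{\epsilon}(\theta))\geq\eta>0$, so $\pi\in\mathcal{K}(\theta)$ and $(\pi,\acute{\pi})\in\Pi_{\rho}(\theta)$ whenever $\eta<\rho$. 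Given $\delta_{1}>0$, choose $\eta<\min(\delta_{1},\rho)$.

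Applying Schwartz' Corollary \ref{cor_schwartz} to $\pi\in\mathcal{K}(\theta)$ with the open (hence measurable) neighborhood $U:=\{\theta''\in\Theta:d_{\Theta}(\theta,\theta'')<r\}$ yields $\pi_{n}(U)\to 1$ $P_{\theta}^{\infty}$-almost surely. For $\acute{\pi}=\delta_{\theta'}$ the Bayes formula for densities gives $\acute{\pi}_{x^{n}}=\delta_{\theta'}$ whenever the normalizing constant $p(x^{n}\mid\theta')$ is positive, and we select the canonical measurable version by setting $\acute{\pi}_{x^{n}}:=\delta_{\theta'}$ on the exceptional $P_{\theta'}^{n}$-null set as well; hence $\acute{\pi}_{n}\equiv\delta_{\theta'}$. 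The Prokhorov inflation $U^{\gamma}$ is contained in the ball of radius $r+\gamma$ around $\theta$, which excludes $\theta'$ as soon as $\gamma<2r$, so $\acute{\pi}_{n}(U^{\gamma})=0$ while $\pi_{n}(U)\to 1$; this forces $d_{Pr\Theta}(\pi_{n},\acute{\pi}_{n})\geq r$ eventually a.s., and the definition of the Ky Fan metric gives $\alpha_{\mu}(\pi_{n},\acute{\pi}_{n})\geq\epsilon_{2}:=\min(r,1/2)$ for all large $n$. Since $\delta_{1}$ was arbitrary, qualitative robustness fails.

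The main technical delicacy is the specification of the posterior map $x^{n}\mapsto\acute{\pi}_{x^{n}}$ on $P_{\theta}^{n}$-data when $P_{\theta}$ fails to be absolutely continuous with respect to $P_{\theta'}$: the Bayes formula is only meaningful $P_{\theta'}^{n}$-a.s., whereas we push forward $P_{\theta}^{\infty}$. The canonical version prescribed above is a valid, measurable regular conditional distribution and makes the pushforward under $P_{\theta}^{\infty}$ unambiguous. An alternative insulation from this concern is to replace $\delta_{\theta'}$ by a prior absolutely continuous with respect to a reference measure and supported in a small closed ball around $\theta'$: the Bayes formula then keeps the posterior supported in that ball almost surely, and the same Prokhorov/Ky Fan lower bounds go through.
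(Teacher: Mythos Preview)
Your proof is correct and follows essentially the same construction as the paper's: mix a small mass at the data-generating point $\theta$ into a prior concentrated at a distant $\theta'$, invoke Schwartz' Corollary for consistency of the mixture, and observe that the Dirac prior $\delta_{\theta'}$ yields constant posteriors $\delta_{\theta'}$. The paper routes the final estimate through the weaker pseudometric $\acute{\alpha}_{\mu}(\cdot,\cdot)=d_{Pr^{2}\Theta}((\cdot)_{*}\mu^{\infty},(\cdot)_{*}\mu^{\infty})$ on $\mathcal{M}^{2}(\Theta)$ together with Propositions~\ref{prop_schwartz} and~\ref{prop_delta}, whereas you bound the Ky Fan distance directly from the Prokhorov definition on $\mathcal{M}(\Theta)$; your route is a bit more elementary, while the paper's buys a reusable statement (Proposition~\ref{prop_schwartz}) about convergence of posterior laws in $\mathcal{M}^{2}(\Theta)$ that is also used in Theorem~\ref{thm_main3}. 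One cosmetic point: since $d_{Pr\Theta}\leq 1$, your intermediate claim ``$d_{Pr\Theta}(\pi_{n},\acute{\pi}_{n})\geq r$'' should be stated as $\geq \min(r,1)$ (or simply $\geq \min(2r,\pi_{n}(U))$), though your final choice $\epsilon_{2}=\min(r,1/2)$ already absorbs this.
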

\begin{rmk}
Actually the proof shows more; let $D$ denote the diameter of $\Theta$, then for
  $\epsilon <\min{(\frac{D}{2},1)}$, there does not exist a $\delta >0$ such that
 robustness is satisfied.  Since $\min{(\frac{D}{2},1)}$  is large, either half
the diameter of the space or larger than $1$,  we say  the inference is
{\em brittle}.
\end{rmk}

Theorem \ref{thm_main} does not assert that the Bayesian inference is not robust at any specified prior, only that
it is not robust under the assumption
that  the prior has Kullback-Leibler support at the parameter value generating the data. To establish non-robustness
at specific priors we include variation in the data-generating distribution in the model class as follows.
Let $\Delta_{P} \subset \mathcal{M}(X)\times \mathcal{M}(X)$, defined by
\[\Delta_{P}=\{(P_{\theta},P_{\theta}), \theta \in \Theta\}\,,\]
denote the fact that we allow the data generating distribution to vary throughout the model class but do not allow any perturbations to it.
Then,
for $\pi \in \mathcal{M}(\Theta)$, define the  admissible set
 $\mathcal{Z}_{\rho}(\pi) \subset \bigl(\mathcal{M}(\Theta) \times \mathcal{M}(X)\bigr)^{2}   $
by
\begin{equation*}
\label{def_Zbis}
\mathcal{Z}_{\rho}(\pi):= \pi \times B^{tv}_{\rho}(\pi) \times \Delta_{P}
\, ,
\end{equation*}
where $B^{tv}_{\rho}(\pi)$ is the open ball in the total variation metric.

Since the following theorem is a corollary to the theorem after it, Theorem \ref{thm_main3}, we do not include its proof. However, we state it here because it is the more fundamental result.
\begin{thm}
\label{thm_main2}
Consider the sitiuation of Theorem \ref{thm_main}
 with  $\Theta$ not totally bounded. Then if the prior
$\pi$ has Kullback-Leibler support for all $\theta \in \Theta$,
   the Bayesian inference is not qualitatively robust with respect to
 $\mathcal{Z}_{\rho}(\pi)$ for  all
  $\rho >0$.
\end{thm}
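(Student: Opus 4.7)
The plan is to exploit the non-total-boundedness of $\Theta$ to find, for any prescribed total-variation radius $\delta>0$, a perturbation $\acute{\pi}$ with $d_{tv}(\pi,\acute{\pi})<\delta$ whose posteriors are forever blocked from reaching a fixed open neighborhood of some $\theta\in\Theta$ at which the unperturbed posterior $\pi_{x^n}$ nevertheless concentrates. The heuristic is that a probability measure on a non-totally-bounded space must give vanishing mass to the members of an infinite family of pairwise disjoint open balls, so an arbitrarily small TV surgery can excise the local mass of $\pi$ around one such $\theta$, destroying consistency of $\acute{\pi}_{n}$; meanwhile the unperturbed $\pi$ retains $\pi\in\mathcal{K}(\theta)$ by the global Kullback-Leibler support hypothesis, so $\pi_{n}$ concentrates at $\theta$ by Schwartz' Corollary \ref{cor_schwartz}.

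Concretely, non-total-boundedness yields $\eta\in(0,1)$ and an infinite $\eta$-separated sequence $\{\theta_i\}_{i\ge 1}\subset\Theta$. The open balls $B_{\eta/2}(\theta_i)$ are then pairwise disjoint Borel subsets of $\Theta$, whence $\sum_i\pi(B_{\eta/2}(\theta_i))\le 1$ and $\pi(B_{\eta/2}(\theta_i))\to 0$. For any $\delta\in(0,\rho)$, pick $i_0$ with $p:=\pi(B_{\eta/2}(\theta_{i_0}))<\delta$ and define
\[
\acute{\pi}(A):=\frac{\pi(A\setminus B_{\eta/2}(\theta_{i_0}))}{1-p},\qquad A\in\mathcal{B}(\Theta).
\]
A direct computation gives $d_{tv}(\pi,\acute{\pi})=p<\delta\le\rho$, so $\acute{\pi}\in B^{tv}_{\rho}(\pi)$ and the quadruple $\bigl((\pi,\acute{\pi}),(P_{\theta_{i_0}},P_{\theta_{i_0}})\bigr)$ lies in $\mathcal{Z}_{\rho}(\pi)$.

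Sampling $x^\infty\sim P^{\infty}_{\theta_{i_0}}$, the hypothesis $\pi\in\mathcal{K}$ gives $\pi\in\mathcal{K}(\theta_{i_0})$, so Corollary \ref{cor_schwartz} applied to the measurable neighborhood $B_{\eta/4}(\theta_{i_0})$ of $\theta_{i_0}$ yields $\pi_{x^n}\bigl(B_{\eta/4}(\theta_{i_0})\bigr)\to 1$ almost surely. Since $\acute{\pi}$ vanishes on $B_{\eta/2}(\theta_{i_0})$ and Bayesian conditioning preserves the support of the prior, $\acute{\pi}_{x^n}\bigl(B_{\eta/2}(\theta_{i_0})\bigr)=0$ identically. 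Taking $A:=B_{\eta/4}(\theta_{i_0})$ and $\epsilon:=\eta/8$, the enlargement $A^{\epsilon}\subset B_{3\eta/8}(\theta_{i_0})\subset B_{\eta/2}(\theta_{i_0})$, so $\acute{\pi}_{x^n}(A^{\epsilon})=0$ while $\pi_{x^n}(A)\to 1$. Definition \eqref{def_Pr} then forces $d_{Pr\Theta}(\pi_{x^n},\acute{\pi}_{x^n})>\eta/8$ with $P^{\infty}_{\theta_{i_0}}$-probability tending to $1$, and definition \eqref{def_kyfan} gives $\alpha_{P_{\theta_{i_0}}}(\pi_{n},\acute{\pi}_{n})>\eta/8$ for all sufficiently large $n$. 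Setting $\epsilon_2:=\eta/8$ exhibits a violation of qualitative robustness for every $\delta_1>0$, since the coincidence $\mu=\acute{\mu}=P_{\theta_{i_0}}$ trivially satisfies the $d_{\mathcal{M}(X)}$-closeness and makes $\acute{\pi}_{*}\mu^{n}=\acute{\pi}_{*}\acute{\mu}^{n}$.

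The essential content sits in the selection step: the non-total-boundedness of $\Theta$ is used precisely to furnish a ``receptacle'' $\theta_{i_0}$ whose fixed-radius topological neighborhood carries arbitrarily small $\pi$-mass, which is what allows an arbitrarily small TV surgery to completely disconnect that neighborhood from the support of $\acute{\pi}$. The remaining ingredients—Borel measurability of the balls used in Schwartz' Corollary (automatic since $\Theta$ is Borel in a Polish space), support-preservation under Bayes' rule (available since the model is dominated and Schervish's density form of Bayes' rule applies), and the enlargement computation for $d_{Pr\Theta}$—are routine. Note that no misspecification is invoked, since $P_{\theta_{i_0}}$ lies in the model class; this sharpens Theorem \ref{thm_main} by pinning non-robustness to the specifically chosen prior $\pi$ rather than merely within the class $\mathcal{K}(\theta)$, at the cost of the topological hypothesis on $\Theta$ and the strengthened global-KL-support assumption on $\pi$.
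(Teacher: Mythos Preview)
Your argument is correct and follows essentially the same construction as the paper's: the paper derives Theorem~\ref{thm_main2} as the $\mathcal{N}_{2\epsilon}=\infty$ case of Theorem~\ref{thm_main3}, whose proof likewise selects a packing point $\theta^{*}$ with $\pi(B_{\epsilon}(\theta^{*}))$ small, defines the perturbed prior as the renormalized restriction $\pi^{\epsilon}(B)=\pi(B_{\epsilon}^{c}\cap B)/\pi(B_{\epsilon}^{c})$, invokes Schwartz' Corollary for the unperturbed $\pi$, and uses support preservation (Schervish~Thm.~1.31) to keep $\pi^{\epsilon}_{x^{n}}$ away from $\theta^{*}$. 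The only notable difference is in the endgame: the paper routes the separation through the weaker pseudometric $\acute{\alpha}_{\mu}$ of \eqref{eq_prokhovdef}, using Proposition~\ref{prop_schwartz} to pass to $\delta_{\delta_{\theta^{*}}}\in\mathcal{M}^{2}(\Theta)$ and then Lemmas~\ref{lem_lb} and~\ref{lem_kyfanprokhorov} to lower bound $d_{Pr^{2}\Theta}\bigl((\pi^{\epsilon}_{n})_{*}P^{\infty}_{\theta^{*}},\delta_{\delta_{\theta^{*}}}\bigr)$, whereas you compare $\pi_{x^{n}}$ and $\acute{\pi}_{x^{n}}$ directly in $d_{Pr\Theta}$ using a single test set $A=B_{\eta/4}(\theta_{i_{0}})$ and read off the Ky Fan bound. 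Your route is slightly more elementary for the non-totally-bounded case at hand; the paper's detour through $\mathcal{M}^{2}(\Theta)$ is what allows the same machinery to yield the quantitative bound of Theorem~\ref{thm_main3} when $\Theta$ is totally bounded.
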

 Since a metric space is totally bounded
if and only if its completion is compact, when $\Theta$ is totally bounded, we  assume
that it is a Borel subset of a compact metric space. In this case, although Theorem \ref{thm_main2} does not apply,
 utilizing  the covering number and packing number inequalities of
 Kolmogorov and Tikhomirov \cite{KolmogorovTikhomirov},
we can provide a  natural {\em quantification of qualitative robustness}.
To that end, we define covering  and  packing numbers. For a finite subset $\Theta'\subset \Theta$, the finite collection of open balls
$\{B_{\epsilon}(\theta), \theta \in \Theta'\}$
is said to constitute a covering of $\Theta$ if
$\Theta \subset \cup_{\theta \in \Theta'}{B_{\epsilon}(\theta)}$.  For a finite
set $\Theta'$ we denote its size by $|\Theta'|$. The covering numbers are defined by
\[\mathcal{N}_{\epsilon}(\Theta)= \min \Bigl\{|\Theta'|: \Theta \subset \cup_{\theta \in \Theta'}{B_{\epsilon}(\theta)} \Bigr\}\, , \]
that is, $\mathcal{N}_{\epsilon}(\Theta) $ is the smallest number of open
balls of radius $\epsilon$ centered on points in $\Theta$ which covers $\Theta$.
On the other hand,  a set of points $\Theta' \subset \Theta$ is said to constitute
an $\epsilon$-packing  if $d(\theta_{1},\theta_{2})\geq \epsilon, \, \theta_{1}\neq \theta_{2} \in \Theta'$.
The packing numbers are then defined by
\[\mathcal{M}_{\epsilon}(\Theta):=\max \Bigl\{|\Theta'|: \Theta' \,\, \text{ is an $\epsilon$-packing
of}\,\, \Theta\,  \Bigr\}\, .\]
Since the Kolmogorov and Tikhomirov \cite[Thm.~IV]{KolmogorovTikhomirov}  inequalities
\begin{equation}
\label{eq_kolm}
\mathcal{M}_{2\epsilon}(\Theta) \leq \mathcal{N}_{\epsilon}(\Theta) \leq \mathcal{M}_{\epsilon}(\Theta)\,
\end{equation}
 are valid in the {\em not} totally bounded case,
if we allow values of $\infty$,
the following theorem has Theorem \ref{thm_main2} as its corollary.
\begin{thm}
\label{thm_main3}
Given the conditions of Theorem \ref{thm_main2} with $\Theta$  totally  bounded.
 If  the Bayesian inference
  is
qualitatively robust with respect to $\mathcal{Z}_{\rho}(\pi)$ for some $\rho >0$,  then given $\epsilon_{2} >0$,   we must have
\[ \delta_{1} < \min{\Bigl(\frac{1}{\mathcal{N}_{2\epsilon_{2}}(\Theta)}, \rho \Bigr)} \, .\]
\end{thm}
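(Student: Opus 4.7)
The plan is to prove the contrapositive: given a candidate $\delta_{1}\geq 1/\mathcal{N}_{2\epsilon_{2}}(\Theta)$, I will construct a perturbed prior $\acute{\pi}$ with $d_{tv}(\pi,\acute{\pi})\leq\delta_{1}$ and a parameter $\theta^{*}\in\Theta$ such that the Ky Fan distance $\alpha_{P_{\theta^{*}}}(\pi_{n},\acute{\pi}_{n})$ asymptotically reaches $\epsilon_{2}$, contradicting the required $\alpha_{n}<\epsilon_{2}$ for large $n$. The perturbation is obtained by excising a small metric ball around $\theta^{*}$ from the support of $\pi$, so that the Schwartz consistency of $\pi_{n}$ at $\theta^{*}$ (Corollary~\ref{cor_schwartz}) clashes with the forced displacement of $\acute{\pi}_{n}$ by distance $\epsilon_{2}$. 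The half $\delta_{1}<\rho$ is immediate from the admissibility constraint $\acute{\pi}\in B^{tv}_{\rho}(\pi)$ in $\mathcal{Z}_{\rho}(\pi)$: any $\delta_{1}\geq\rho$ adds no further restriction on tested perturbations and can equivalently be replaced by some $\delta_{1}<\rho$.

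For the bound $\delta_{1}<1/\mathcal{N}_{2\epsilon_{2}}(\Theta)$, I first apply the Kolmogorov--Tikhomirov inequality~\eqref{eq_kolm} to obtain $1/\mathcal{M}_{2\epsilon_{2}}(\Theta)\leq 1/\mathcal{N}_{2\epsilon_{2}}(\Theta)\leq\delta_{1}$. Fixing a maximal $2\epsilon_{2}$-packing $\{\theta_{1},\ldots,\theta_{M}\}$ with $M=\mathcal{M}_{2\epsilon_{2}}(\Theta)$, the pairwise disjointness of the balls $B_{\epsilon_{2}}(\theta_{i})$ and pigeonhole produce some $\theta^{*}=\theta_{i^{*}}$ with $\pi(B_{\epsilon_{2}}(\theta^{*}))\leq 1/M\leq\delta_{1}$. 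Setting $U:=B_{\epsilon_{2}}(\theta^{*})$ and
\[
\acute{\pi}(A):=\frac{\pi(A\cap U^{c})}{\pi(U^{c})},\quad A\in\mathcal{B}(\Theta),
\]
a direct computation yields $d_{tv}(\pi,\acute{\pi})=\pi(U)\leq\delta_{1}$, and $\acute{\pi}$ is supported on the closed set $U^{c}=\{\theta\in\Theta:d(\theta,\theta^{*})\geq\epsilon_{2}\}$. I choose the data-generating pair $\mu=\acute{\mu}=P_{\theta^{*}}\in\Delta_{P}$.

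Since $\pi$ has global Kullback--Leibler support, it has KL support at $\theta^{*}$, so Corollary~\ref{cor_schwartz} yields $\pi_{x^{n}}(V)\to 1$ for every measurable neighborhood $V$ of $\theta^{*}$, $P_{\theta^{*}}^{\infty}$-a.s., equivalently $d_{Pr\Theta}(\pi_{n},\delta_{\theta^{*}})\to 0$ a.s. Now $\acute{\pi}_{n}$ inherits the support of $\acute{\pi}$ on $U^{c}$; for any Borel $\nu$ with $\supp(\nu)\subset U^{c}$, the Prokhorov defining inequality~\eqref{def_Pr} with $A=\{\theta^{*}\}$ gives $1=\delta_{\theta^{*}}(A)\leq\nu(B_{\epsilon}(\theta^{*}))+\epsilon=\epsilon$ for every $\epsilon<\epsilon_{2}\wedge 1$, which fails, hence $d_{Pr\Theta}(\delta_{\theta^{*}},\nu)\geq\epsilon_{2}$ deterministically. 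The triangle inequality yields $d_{Pr\Theta}(\pi_{n},\acute{\pi}_{n})\geq\epsilon_{2}-d_{Pr\Theta}(\pi_{n},\delta_{\theta^{*}})$ a.s., so $P_{\theta^{*}}^{\infty}\bigl(d_{Pr\Theta}(\pi_{n},\acute{\pi}_{n})>\epsilon_{2}-\gamma\bigr)\to 1$ for every $\gamma\in(0,\epsilon_{2})$. The Ky Fan definition~\eqref{def_kyfan} then forces $\alpha_{P_{\theta^{*}}}(\pi_{n},\acute{\pi}_{n})\geq\epsilon_{2}-\gamma$ for all large $n$, contradicting robustness.

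The main technical obstacle is securing the strict violation $\alpha_{n}\geq\epsilon_{2}$ rather than merely $\alpha_{n}\geq\epsilon_{2}-\gamma$ for every $\gamma>0$, since the natural Prokhorov lower bound only approaches $\epsilon_{2}$ asymptotically and leaves open a borderline scenario $\alpha_{n}\to\epsilon_{2}^{-}$ while still satisfying $\alpha_{n}<\epsilon_{2}$ for each large $n$. I expect to close this gap either by enlarging the excised ball to $B_{\epsilon_{2}+\gamma}(\theta^{*})$ with a small buffer $\gamma>0$ (which forces $d_{Pr\Theta}(\pi_{n},\acute{\pi}_{n})>\epsilon_{2}$ eventually) while verifying via a refined application of~\eqref{eq_kolm} that the TV-bound $\pi(B_{\epsilon_{2}+\gamma}(\theta^{*}))\leq 1/\mathcal{M}_{2(\epsilon_{2}+\gamma)}(\Theta)$ remains compatible with $\delta_{1}\geq 1/\mathcal{N}_{2\epsilon_{2}}(\Theta)$, or by exploiting the strict separation in the packing--covering chain $\mathcal{M}_{2\epsilon_{2}}\geq\mathcal{N}_{2\epsilon_{2}}$ available under mild regularity on $\Theta$.
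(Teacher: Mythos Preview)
Your approach is essentially the same as the paper's: the same packing/pigeonhole argument to locate $\theta^{*}$ with $\pi(B_{\epsilon_{2}}(\theta^{*}))\leq 1/\mathcal{M}_{2\epsilon_{2}}\leq 1/\mathcal{N}_{2\epsilon_{2}}$, the same perturbed prior obtained by excising $B_{\epsilon_{2}}(\theta^{*})$ and renormalizing, the same appeal to Corollary~\ref{cor_schwartz} for the consistency of $\pi_{n}$ at $\theta^{*}$, and the same triangle-inequality lower bound using that the perturbed posterior is supported outside $B_{\epsilon_{2}}(\theta^{*})$. The only cosmetic difference is that the paper routes the final estimate through the weaker pseudometric $\acute{\alpha}_{\mu}=d_{Pr^{2}\Theta}$ on laws (via \eqref{eq_ivuyvuyv} and Lemma~\ref{lem_kyfanprokhorov}) rather than working with the Ky Fan metric directly as you do; since $\acute{\alpha}_{\mu}\leq\alpha_{\mu}$, your direct route is equally valid.

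Regarding your flagged obstacle: the paper's proof has exactly the same boundary slack. It too obtains only $d_{Pr^{2}\Theta}\bigl((\pi_{n})_{*}P_{\theta^{*}}^{\infty},(\pi^{\epsilon}_{n})_{*}P_{\theta^{*}}^{\infty}\bigr)\geq\epsilon-\acute{\epsilon}$ for every $\acute{\epsilon}>0$ and then passes directly to $\delta<1/\mathcal{N}_{2\epsilon}$ without further comment. So you are being more scrupulous than the paper itself, and for the purposes of matching the paper's argument you can simply stop where the paper does. If you do wish to close the gap, be aware that your first proposed remedy (excising $B_{\epsilon_{2}+\gamma}(\theta^{*})$) runs into a monotonicity problem: the relevant pigeonhole bound becomes $1/\mathcal{M}_{2(\epsilon_{2}+\gamma)}$, and since packing numbers are nonincreasing in the scale this is \emph{larger}, not smaller, than $1/\mathcal{M}_{2\epsilon_{2}}$, so it need not sit below $1/\mathcal{N}_{2\epsilon_{2}}$.
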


\section{Mechanisms generating non-robustness}\label{subsec:mechanism}
For the clarity of the paper, in this subsection, we illustrate some of the mechanisms generating
non qualitative robustness in Bayesian inference, which complement the mechanism discovered by
Hampel \cite[Lem.~3]{Hampel}
 and Cuevas \cite[Thm.~1]{Cuevas}.  These mechanisms do not utilize  misspecification. Those which do
 are discussed
in Subsection \ref{subsecmiss}.
The core mechanism is derived from the nature of both the assumptions and assertions of results supporting consistency.
More precisely,
Corollary \ref{cor_schwartz} states that if the data generating distribution is $\mu=P(\theta^*)$ and if the prior $\pi$ attributes positive mass to every Kullback-Leibler neighborhood of $\theta^*\in \Theta$, then the posterior
 distribution converges towards $\delta_{\theta^*}$ as $n\rightarrow \infty$.
	\begin{figure}[tp]
\begin{center}
			\includegraphics[width=0.8\textwidth]{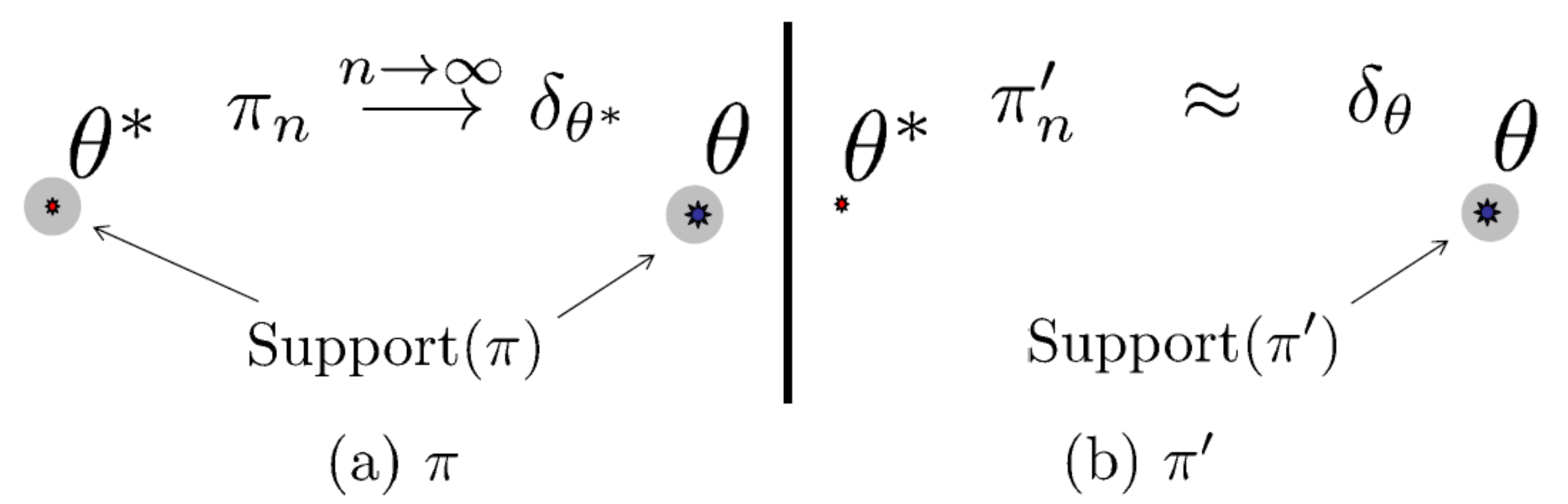}
		\caption{The data generating distribution is $P(\theta^*)$.  $\pi'$  has most of its mass around $\theta$. $\pi$ is an arbitrarily small perturbation of $\pi'$ so that $\pi$ has Kullback-Leibler support at $\theta^*$. Corollary \ref{cor_schwartz} implies that $\pi_n$ converges towards $\delta_{\theta^*}$ while $\pi_n'$ remains close to $\delta_\theta$.}
		\label{fig:bigfig1}
\end{center}
	\end{figure}
The assumption that $\pi$ attributes positive mass to every Kullback-Leibler neighborhood of $\theta^*\in \Theta$ does not require
    $\pi$ to place a significant amount of mass around $\theta^*$, but instead
can be satisfied with an arbitrarily small amount. Therefore, if, as in Figure \ref{fig:bigfig1}, $\pi$ is a prior distribution with support centered around $\theta\not=\theta^*$, but with a very small amount of mass
about $\theta^*$, so that it satisfies the  assumptions of Corollary \ref{cor_schwartz} at $\theta^*$, then $\pi$ can be slightly perturbed into a $\pi'$
  with support also centered around $\theta\not=\theta^*$, but with no  mass about $\theta^*$.
   In this situation, although $\pi$ and $\pi'$ can be made arbitrarily close in total variation distance, the posterior distribution of $\pi$ converges towards $\delta_{\theta^*}$ as $n\rightarrow \infty$, whereas that of $\pi'$ remains  close to $\delta_\theta$.
	\begin{figure}[tp]
\begin{center}
	\subfigure[Density of $\pi$]{
			\includegraphics[width=0.3\textwidth]{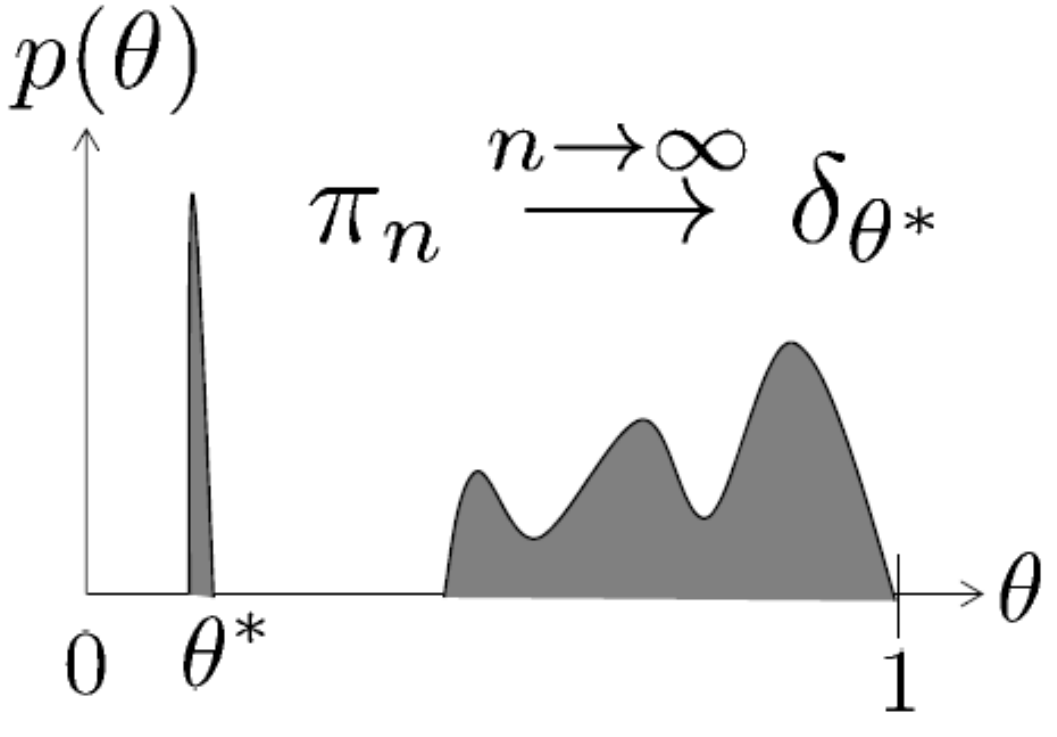}
		}
		\subfigure[Density of $\pi$]{
			\includegraphics[width=0.3\textwidth]{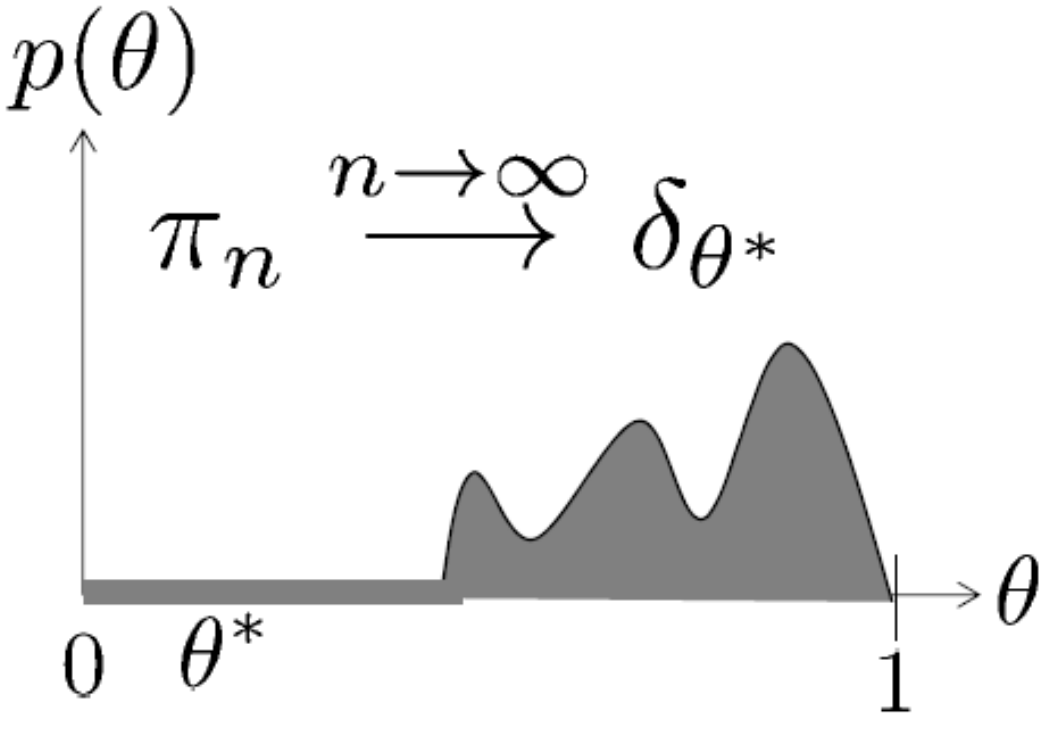}
		}
		\subfigure[Density of $\pi'$]{
			\includegraphics[width=0.3\textwidth]{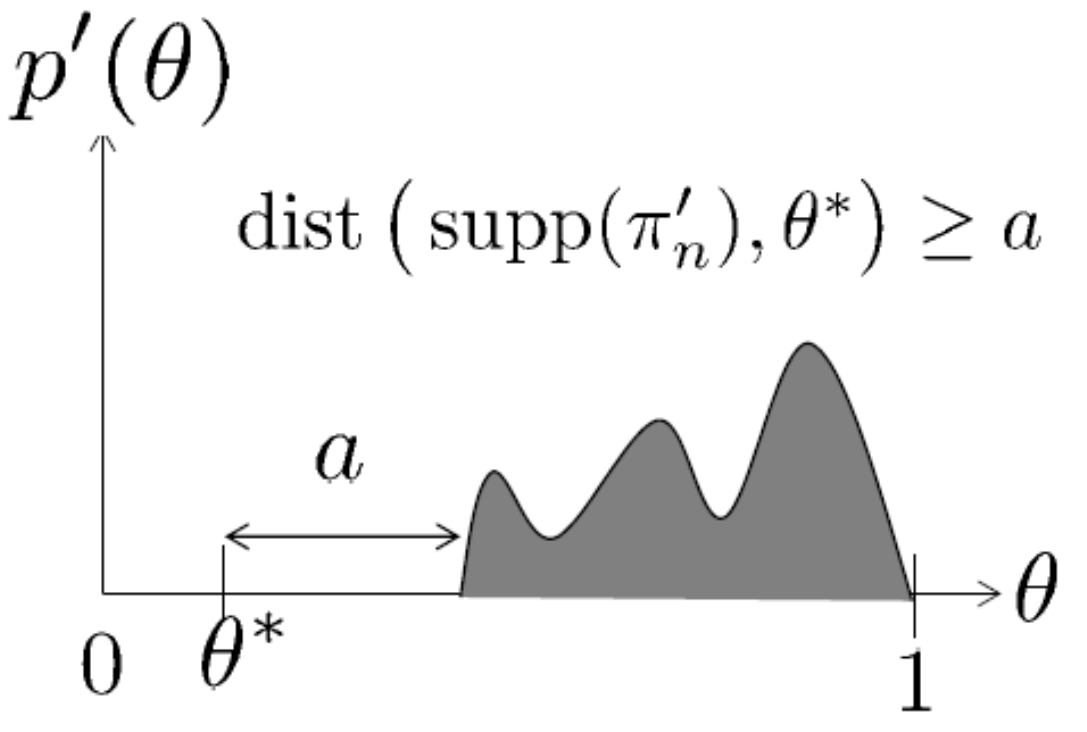}
		}
		\caption{The data generating distribution is $P(\theta^*)$. The probability density functions of $\pi$ and $\pi'$ are $p$ and $p'$ with respect to the uniform distribution on $[0,1]$. $\pi$  is an arbitrarily  small perturbation of $\pi'$ in total variation.
 $\pi_n$ converges towards $\delta_{\theta^*}$ while the distance between the support of $\pi_n'$ and $\theta^*$ remains bounded from below by $a>0$.}
		\label{fig:bigfig2}
\end{center}
	\end{figure}
Figure \ref{fig:bigfig2}
gives an illustration of the same phenomenon when the parameter space $\Theta$ is the interval $[0,1]$ and the probability density functions of $\pi$ and $\pi'$ are $p$ and $p'$ with respect to the uniform measure.

Note that the mechanism illustrated in Figures \ref{fig:bigfig1} and \ref{fig:bigfig2} does not generate non
qualitative robustness at {\em  all priors}  but instead  for the  full class of {\em consistency priors}, defined by the assumption of having positive mass on every Kullback-Leibler neighborhood of $\theta^*$.
One may wonder whether this non qualitative robustness can be avoided by selecting the prior $\pi$ to satisfy Cromwell's rule (that is, the assumption that $\pi$ gives strictly positive mass to every nontrivial open subset of the parameter space $\Theta$). Theorem \ref{thm_main2} shows that this is not the case if the parameter space $\Theta$ is not totally bounded. For example, when $\Theta=\R$, for all $\delta>0$ one can find $\theta\in \R$ such that the mass that $\pi$ places on the ball of center $\theta$ and radius one is smaller than $\delta$, and by displacing this small amount of mass one obtains a perturbed prior $\pi'$ whose posterior distribution remains asymptotically bounded away from that of $\pi$ when the data-generating distribution is $P(\theta)$. Similarly if $\Theta$ is
 totally bounded then Theorem \ref{thm_main3} places an upper bound on the size of the perturbation
 of the prior $\pi$ that would be required as a function of the covering complexity of $\Theta$.
Note that these observations suggest that a maximally qualitatively robust prior should place as much mass as possible near all possible candidates $\theta$ for the parameter $\theta^*$ of the data generating distribution, thereby reinforcing the notion that a maximally robust prior should have its mass spread as uniformly as possible over the parameter space. 

	\begin{figure}[tp]
\begin{center}
	\subfigure[$\pi$]{
			\includegraphics[width=0.4\textwidth]{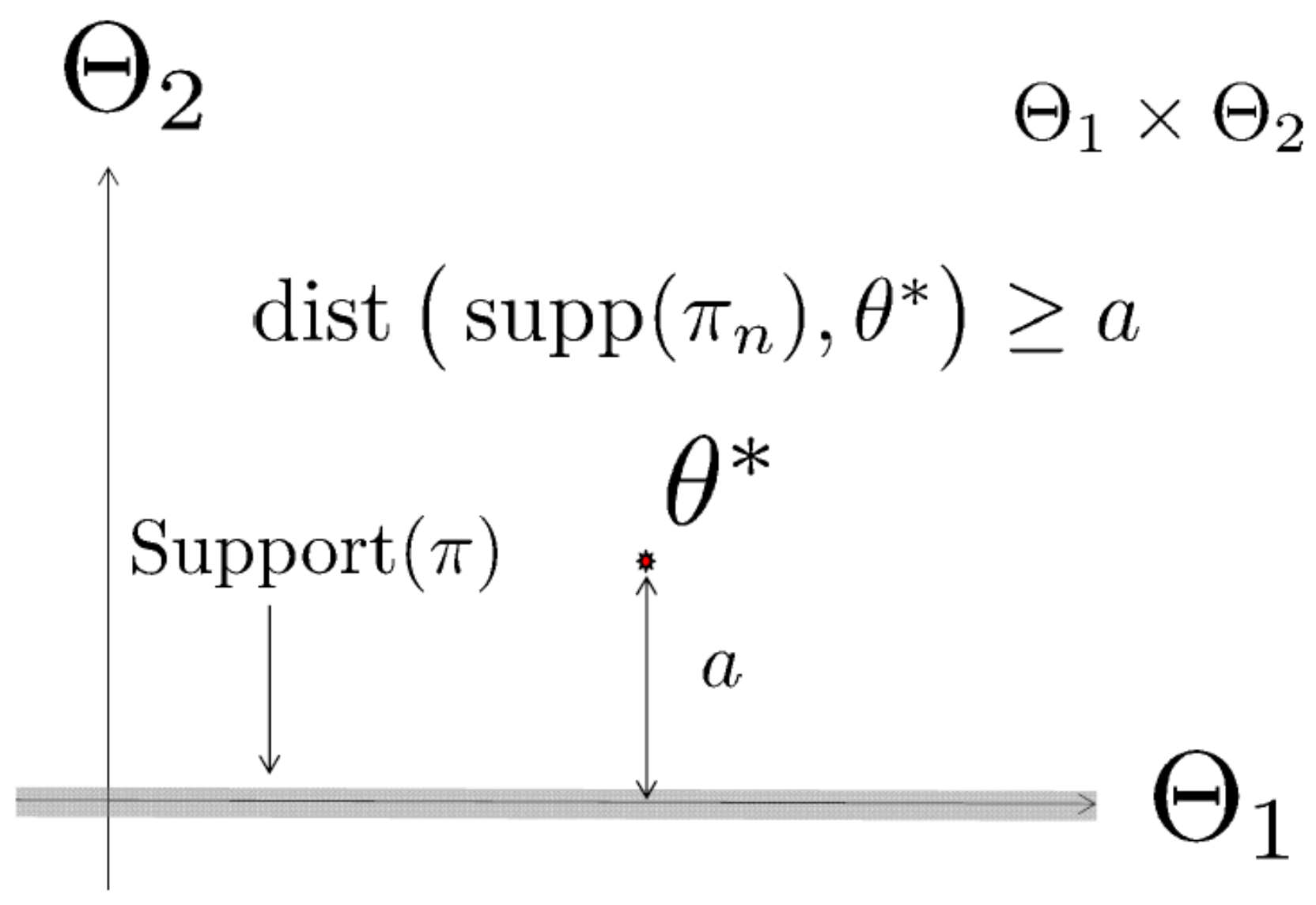}
		}
		\subfigure[$\pi'$]{
			\includegraphics[width=0.4\textwidth]{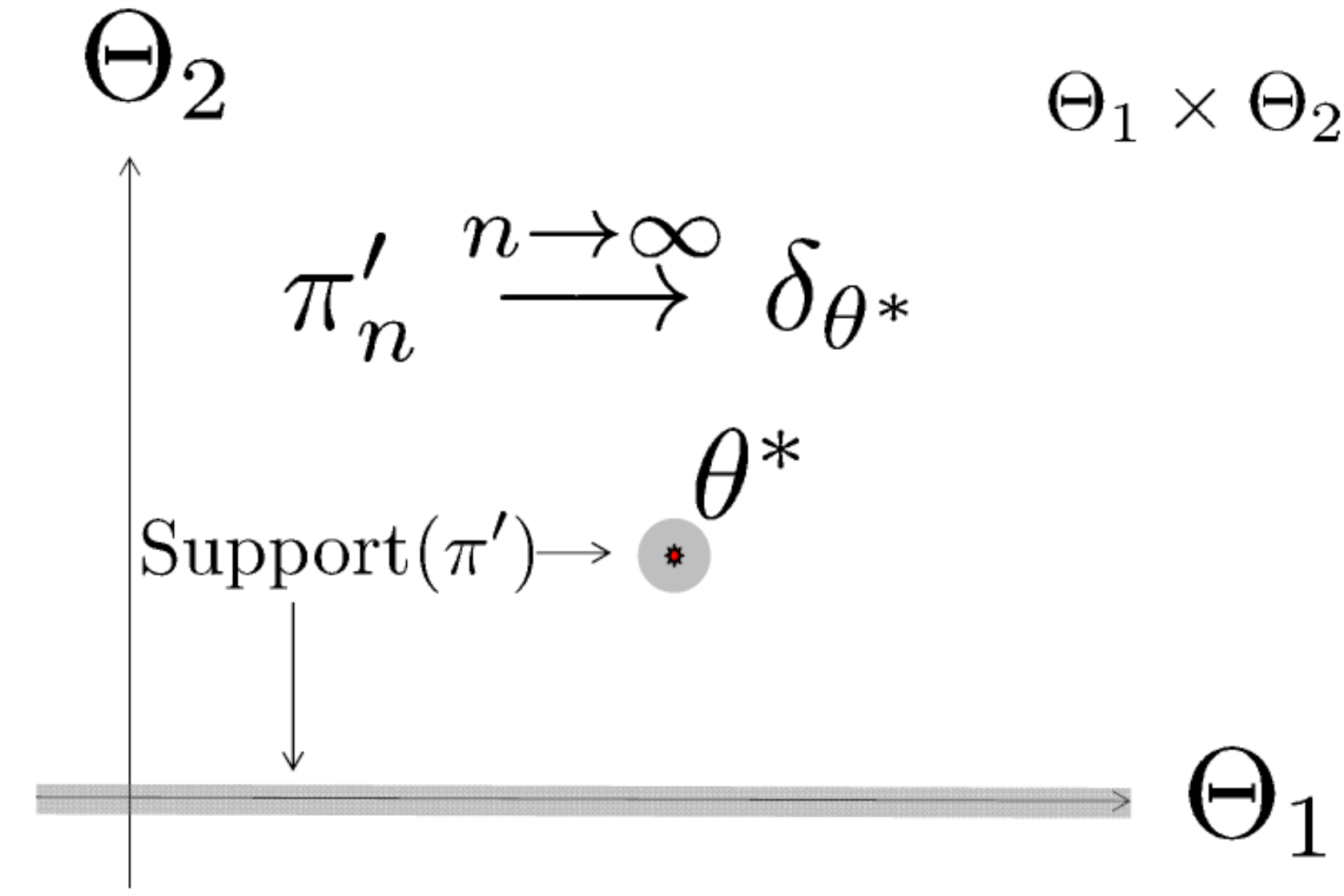}
		}
		\caption{Non-robustness caused by misspecification.
The parameter space of the model is $\Theta_1$. We assume that the model $P \,:\, \Theta_1 \rightarrow \mathcal{M}(X)$ is the restriction of an
injective model $\bar{P}\,:\, \Theta_1\times \Theta_2 \rightarrow \mathcal{M}(X)$ to $\Theta_1\times \{\theta_2=0\}$. The data generating distribution is $\bar{P}(\theta^*)$ where $\theta^{*}:=(\theta_{1}^{*},\theta_{2}^{*})$, with $\theta_{2}^{*}\neq 0$, so that the model $P$ is misspecified. $\pi$ satisfies Cromwell's rule. $\pi'$ is an arbitrarily small perturbation of $\pi$ having Kullback-Leibler support at $\theta^*$. Corollary \ref{cor_schwartz} implies that $\pi_n'$ converges towards $\delta_{\theta^*}$ while the distance between the support of $\pi_n$ and $\theta^*$ remains bounded from below by $a>0$.}
		\label{fig:bigfig3}
\end{center}
	\end{figure}
	\begin{figure}[tp]
\begin{center}
	\subfigure[$P_*\pi$]{
			\includegraphics[width=0.35\textwidth]{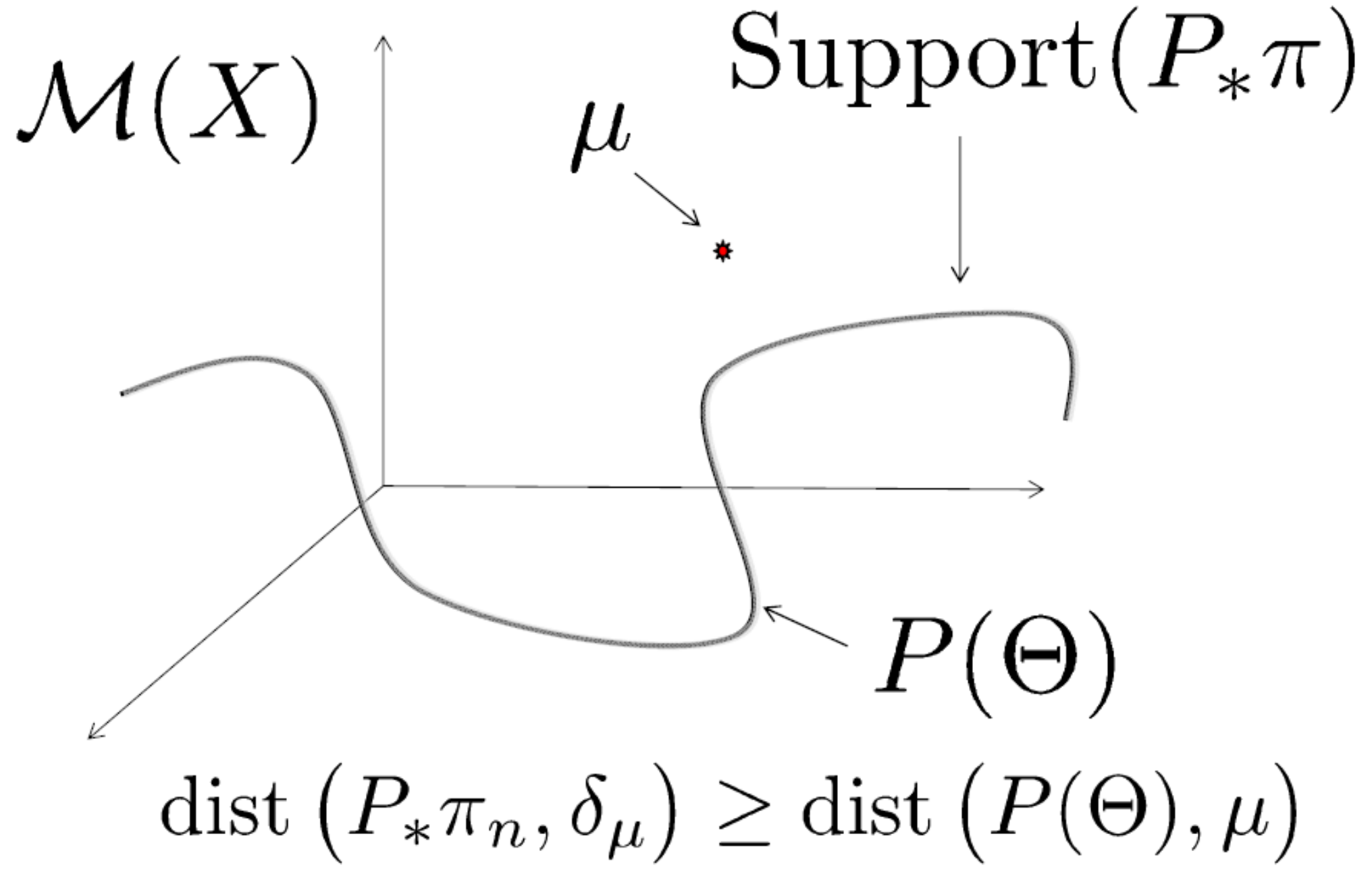}
		}
		\subfigure[$\nu$]{
			\includegraphics[width=0.35\textwidth]{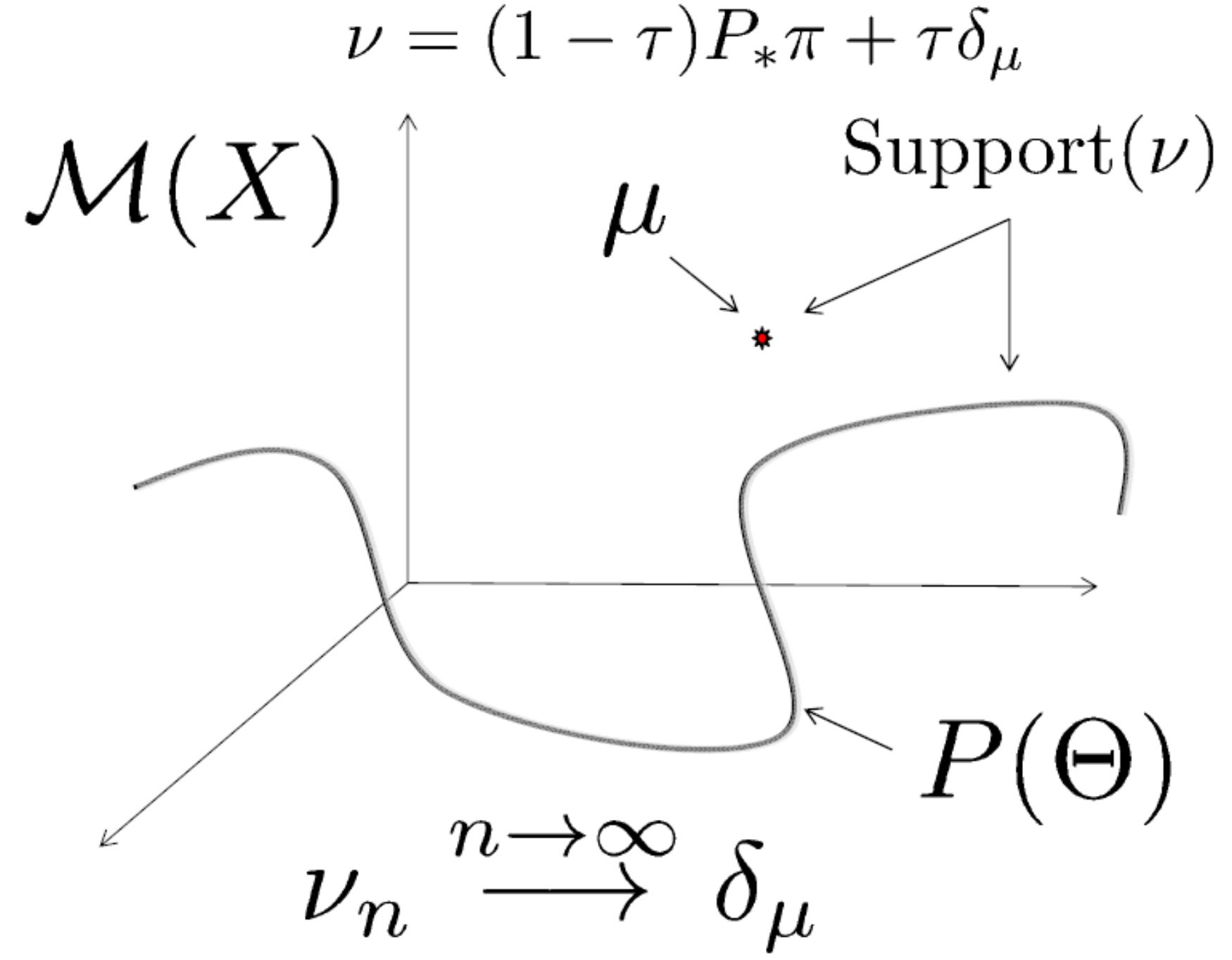}
		}
		\caption{Non-robustness caused by misspecification. The parameter space of the model is $\Theta$. The data generating distribution is $\mu \not\in P(\Theta)$, so that the model is misspecified.
$\nu$ is an arbitrarily small perturbation of $P_*\pi$ in total variation distance having non-zero mass on $\mu$. Note that if a small mass on $\mu$ is sufficient to ensure the consistency of $\nu$ then such a mechanism also implies the non-robustness of the model with respect to misspecification since in such a case, $\nu_n$, the posterior distribution of $\nu$ would converge towards $\delta_{\mu}$ whereas the distance between the support of $P_* \pi_n$ and $\mu$ remains bounded from below by the distance from the model to the data generating distribution.}\label{fig:bigfig4}
\end{center}
	\end{figure}

\subsection{Robustness under misspecification}\label{subsecmiss}
Although the main results of Section \ref{sec_main}
 do not utilize any  model misspecification, the brittleness results of
\cite{OSS:2013} suggest that misspecification should also generate non qualitative robustness.
Indeed, although, one may find a prior that is both consistent and \emph{qualitatively robust} when $\Theta$ is totally bounded and the model is well-specified, we now show how  extensions of the mechanism illustrated in Figures \ref{fig:bigfig1} and \ref{fig:bigfig2} suggest that misspecification implies non \emph{qualitative robustness}.
Consider the example illustrated in Figure \ref{fig:bigfig3}. In this example the model $P$ is the restriction of a well specified larger model $\bar{P} \,:\, \Theta_1 \times \Theta_2 \rightarrow \mathcal{M}(X)$ to $\theta_2=0$. Assume that the data generating distribution is $\bar{P}(\theta_1^*,\theta_2^*)$ where $\theta_2^*\not=0$, so that the restricted model $P$ is misspecified. Let $\pi$ be any prior distribution on $\Theta_1 \times \{\theta_2=0\}$.  Although $\pi$ may satisfy Cromwell's rule the mechanisms presented in this paper suggest that is not \emph{qualitatively robust} with respect to  perturbed priors having support on $\Theta_1 \times \Theta_2$. Indeed, let $\pi'$ be an arbitrarily small perturbation of $\pi$ obtained by removing some mass from the support of $\pi$ and adding that mass around $\theta^*$. Note that
 $\pi'$ can be chosen arbitrarily close to $\pi$ while satisfying the local consistency assumption of
Corollary \ref{cor_schwartz}, which implies that the posterior distributions of $\pi'$ concentrate on $\theta^*$ while the posterior distributions of $\pi$ remain supported on $\Theta_1 \times \{\theta_2=0\}$.
Note that if $\bar{P}$ is interpreted as an extension of the model $P$, then this mechanism suggests that we can
 establish conditions under which Bayesian inference is not \emph{qualitatively robust under model extension}.

Figure \ref{fig:bigfig4} represents a
 non-parametric generalization of the  mechanism of Figure \ref{fig:bigfig3}.  Assume that the data generating distribution is $\mu \not \in P(\Theta)$, so that the model is misspecified.
 Let $\pi \in \mathcal{M}(\Theta)$  be an arbitrary prior distribution and $P_{*}\pi \in \mathcal{M}^{2}(X)$ its corresponding non-parametric prior.
 By removing an arbitrarily small amount of mass from $P_* \pi$ and placing it on $\mu$ one  obtains an arbitrarily close prior distribution $\nu$ that is consistent with respect to the data generating distribution $\mu$. Therefore although  $P_* \pi$ and $\nu$ may be made arbitrarily close, their posterior distributions would remain asymptotically separated by a distance corresponding to the degree of misspecification of the model (the distance from $\mu$ to $P(\Theta)$).

\section{Proofs}
\subsection{Proof of Corollary \ref{cor_schwartz}}
We seek to apply Schwartz' theorem \cite[Thm.~6.1]{Schwartz:1965}.
Since  $\Theta$ and $X$ are separable metric spaces,
their Borel $\s$-algebras are countably generated, so
Doob's Theorem \cite[Thm. V.58]{dellacherie1982probabilities}
 and the measurability
of the dominated model $P$ implies  that
 a
  family  of  densities can be chosen to be
$\mathcal{B}(X) \times \mathcal{B}(\Theta)$ measurable, thus satisfying
this requirement of  \cite[Thm.~6.1]{Schwartz:1965}.
Since $U$ is a neighborhood it follows that it contains an open neighborhood $O$ of $\theta$.
Since $O$ is open and $P:\Theta \rightarrow P(\Theta)$ is open, it follows
that $P(O)$ is open in $P(\Theta)$, and therefore there is an open set
$V_{*}\subset \mathcal{M}(X)$ such that $V_{*}\cap P(\Theta)=P(O)$. Moreover, $V_{*}$ is an open neighborhood
of $P_{\theta^{*}}$.
Since $X$ is a separable metric space, it follows that
 $d_{PrX}$ metrizes the weak topology, and since $V_{*}$ is open, it is well known (see e.g.~\cite{BarronEtAl:1999,Wasserman_asymptotic,Ghosal_issues})
 that there exists a uniformly consistent test
of $P_{\theta^{*}}$ against $V^{c}_{*}$, see Schwartz \cite{Schwartz:1965} for the definition of uniformly consistent test.
It follows trivially that  there exists a uniformly consistent test of
$P_{\theta^{*}}$ against $V^{c}_{*}\cap P(\Theta)$.
 Moreover,  since
$P$ is injective it follows that $O^{c}=P^{-1}(V^{c}_{*})$. Therefore,
there exists a uniformly consistent test
of $P_{\theta^{*}}$ against $ V^{c}_{*}\cap P(\Theta)=\{P_{\theta}: \theta \in O^{c}\}
$.

Since $V_{*}$ is open, it also follows that there is a Prokhorov metric ball $B_{s}(P_{\theta^{*}})$  of radius $s>0$
about $P_{\theta^{*}}$
such that $B_{s}(P_{\theta^{*}}) \subset V_{*}$.
 Now consider the
Kullback-Leibler ball
$K_{\tau}(P_{\theta^{*}})$ for $\tau <\frac{s^{2}}{2}$.  It follows from   Csiszar, Kemperman and Kullback's \cite{Csiszar_divergence}
  improvement
$K \geq \frac{1}{2}d_{tv}^{2} $
of Pinsker's inequality and the inequality $d_{tv} \geq d_{PrX}$, that $K_{\tau}(P_{\theta^{*}}) \subset B_{s}(P_{\theta^{*}})$.
Since then $K_{\tau}(P_{\theta^{*}}) \subset B_{s}(P_{\theta^{*}})\subset V_{*}$ it follows that
\[P^{-1}\bigl(K_{\tau}(P_{\theta^{*}})\bigr)  \subset P^{-1}(V_{*})= O\, .\]

Consider now the Kullback-Leibler neighborhood $W_{\tau}(\theta^{*})\subset \Theta$ of
 $\theta^{*}$ defined by
pulling $K_{\tau}(P_{\theta^{*}})$ back to $\Theta$ by the model $P$:
\[W_{\tau}(\theta^{*}):=P^{-1}\bigl(K_{\tau}(P_{\theta^{*}})\bigr)\, .\]
Then the previous inequality states that
 \[W_{\tau}(\theta^{*}) \subset O \, .\]
Since the Kullback-Leibler neighborhoods are measurable in the weak topology
and $P$ is assumed measurable, it follows that $W_{\tau}(\theta^{*})$ is measurable.

Therefore, $O$ and $W_{\tau}(\theta^{*})$ satisfy the assumptions of the sets $V$ and
and $W$ in
 \cite[Thm.~6.1]{Schwartz:1965}.
 Consequently, since by assumption, the prior $\pi$ has Kullback-Leibler support, it follows that
we can apply  Schwartz' theorem \cite[Thm.~6.1]{Schwartz:1965} to obtain the assertion for
$O$ and since $U\supset O$ is measurable the assertion follows.

\subsection{Proof of Theorem \ref{thm_main}}
Let us prove the assertion for a weaker pseudometric $\acute{\alpha}_{\mu}\leq \alpha_{\mu}$
 derived from the Prokhorov metric
on $d_{Pr^{2}\Theta}$ on $\mathcal{M}^{2}(\Theta)$. Since it is weaker the assertion follows. To that end,
consider  $\mu \in \mathcal{M}(X)$. Then
for two random variables $Z,W:(X^{\infty},\mu^{\infty}) \rightarrow \mathcal{M}(\Theta)$ it follows that
$Z_{*}\mu^{\infty},W_{*}\mu^{\infty} \in \mathcal{M}^{2}(\Theta)$,  so we can define
a pseudometric  $\acute{\alpha}_{\mu}$  by
\begin{equation}
\label{eq_prokhovdef}
\acute{\alpha}_{\mu}(Z,W):=d_{Pr^{2}\Theta}(Z_{*}\mu^{\infty},W_{*}\mu^{\infty})\, .
\end{equation}
Since
 Dudley \cite[Thm.~11.3.5]{Dudley:2002} asserts that
\begin{equation}
\label{ineq_kf_pr}
d_{Pr^{2}\Theta}(Z_{*}\mu^{\infty},W_{*}\mu^{\infty}) \leq  \alpha_{\mu}(Z,W)\,,
\end{equation}
we conclude that
\begin{equation}
\label{eq_weaker}
\acute{\alpha}_{\mu}\leq \alpha_{\mu}\, .
\end{equation}

For fixed $\pi$ and $\mu$ and $n$,   the   $\mathcal{M}(\Theta)$-valued random variable
\[\pi_{n}:(X^{\infty},\mu^{\infty}) \rightarrow \mathcal{M}(\Theta)\, \]
defined by
$\pi_{n}(x^{\infty}):=\pi_{x^{n}}$ in  \eqref{eq_dekjjibib}   satisfies
 \begin{equation}
\label{eq_laws}
(\pi_{n})_{*}\mu^{\infty}=
\pi_{*}\mu^{n}
\end{equation}
where $\pi_{*}:\mathcal{M}(X^{n})\rightarrow \mathcal{M}^{2}(\Theta)$ is the pushforward operator corresponding to the
 map
$\bar{\pi}:X^{n} \rightarrow \mathcal{M}(\Theta)$ defined by
$\bar{\pi}(x^{n}):=\pi_{x^{n}}$  in \eqref{def_barpi}.
Consequently, we obtain
\begin{equation}
\label{eq_ivuyvuyv}
\acute{\alpha}_{\mu}(\pi_{n},\acute{\pi}_{n})=d_{Pr^{2}\Theta}(\pi_{*}\mu^{n},\acute{\pi}_{*}\mu^{n})\,
\end{equation}
for  $\pi, \acute{\pi} \in \mathcal{M}(\Theta)$  and $n$ fixed.
From the triangle inequality we then obtain
\begin{eqnarray*}
d_{Pr^{2}\Theta}(\pi_{*}\mu^{n}, \acute{\pi}_{*}\acute{\mu}^{n})& \leq &
d_{Pr^{2}\Theta}(\pi_{*}\mu^{n}, \acute{\pi}_{*}\mu^{n})+d_{Pr^{2}\Theta}(\acute{\pi}_{*}\mu^{n}, \acute{\pi}_{*}\acute{\mu}^{n})\\
&\leq& \alpha_{\mu}(\pi_{n},\acute{\pi}_{n}) +d_{Pr^{2}\Theta}(\acute{\pi}_{*}\mu^{n}, \acute{\pi}_{*}\acute{\mu}^{n})\, ,
\end{eqnarray*}
bounding the simple single term $d_{Pr^{2}\Theta}(\pi_{*}\mu^{n}, \acute{\pi}_{*}\acute{\mu}^{n})$ in terms
of the sum of two terms $\alpha_{\mu}(\pi_{n},\acute{\pi}_{n})$ and
$d_{Pr^{2}\Theta}(\acute{\pi}_{*}\mu^{n}, \acute{\pi}_{*}\acute{\mu}^{n})$ of qualitative robustness in
Definition \ref{def_qr_kyfan}. Conseuquently, the assumption of the pseudometric
$\acute{\alpha}_{\mu}$ amounts to Definition
 \ref{def_qr_kyfan} with  one epsilon instead of two corresponding to the
metric
\begin{equation}
\label{def_prpr}
d_{Pr^{2}\Theta}(\pi_{*}\mu^{n}, \acute{\pi}_{*}\acute{\mu}^{n}), \quad (\pi, \acute{\pi},\mu,\acute{\mu})
\in \mathcal{M}(\Theta)^{2} \times \mathcal{M}(X)^{2}\, .
\end{equation}
Moreover, non qualitative robustness with respect to this definition implies non qualitative
robustness with respect to the original  Definition \ref{def_qr_kyfan} with the Ky Fan metric.

Now lets turn to the proof that the inference is non qualitatively robust with respect to
the objective metric \eqref{def_prpr}.
Fix $\theta^{*} \in \Theta$
and consider another point
$\theta \in \Theta$ and  the Dirac mass $\delta_{\theta} \in \mathcal{M}(\Theta)$ situated at $\theta$.
For $\pi \in \mathcal{K}(\theta^{*})$,
 the  convex combination
\[\pi^{\alpha}:= \alpha \pi+(1-\alpha)\delta_{\theta}\]  is a probability measure with Kullback-Leibler support, that is,
$\pi^{\alpha}  \in \mathcal{K}(\theta^{*}),\, \alpha >0$ and
\begin{equation}
\label{eq_close}
d_{tv}(\pi^{\alpha},\delta_{\theta}) \leq \alpha.
\end{equation}
Therefore,   it follows that
\begin{equation*}
\label{eq_admiss} (\pi^{\alpha},\delta_{\theta}) \in \Pi_{\rho}(\theta^{*})\, ,\quad
 \alpha < \rho\, ,\,
\end{equation*}
and therefore
\begin{equation*}
\label{eq_admiss2}
 \bigl(\pi^{\alpha},\delta_{\theta},P_{\theta^{*}},P_{\theta^{*}}\bigr) \in Z_{\rho}(\theta^{*})\, ,\quad
\alpha <\rho \, ,
\end{equation*}
 where
$\mathcal{Z}_{\rho}(\theta^{*})$ is the admissible set defined
in \eqref{def_Z}.

For the prior $\pi^{\alpha}$, let
$\pi^{\alpha}_{n}:(X^{\infty},P_{\theta^{*}}^{\infty}) \rightarrow \mathcal{M}(\Theta)$, defined
by $\pi^{\alpha}_{n}(x^{\infty}):=\pi^{\alpha}_{x^{n}},\, x^{\infty} \in X^{\infty}$,
denote the corresponding sequence  of  posterior random variables,
and let
$(\pi^{\alpha}_{n})_{*}P_{\theta^{*}}^{\infty} \in \mathcal{M}^{2}(\Theta)$
denote its induced sequence of laws.
On the other hand,  for the prior $\delta_{\theta}$, it is easy to see that
 $(\delta_{\theta})_{x^{n}}=\delta_{\theta}, \, x^{n}\in X^{n}$, so that if we denote the corresponding sequence of
  posterior random variables
by $\delta^{n}_{\theta}$, then
 $(\delta^{n}_{\theta})_{*}P_{\theta^{*}}^{\infty}=(\delta_{\theta})_{*}P_{\theta^{*}}^{n}=\delta_{\delta_{\theta}}$.

Since the assumptions of Schwartz' Corollary \ref{cor_schwartz} are satisfied and $\pi^{\alpha}$ has Kullback-Leibler support at
$\theta^{*}$, we can apply the
assertion \eqref{conv_prob} of Proposition \ref{prop_schwartz}
\[P^{\infty}_{\theta^{*}}\Bigl\{ d_{Pr\Theta}(\pi^{\alpha}_{n},\delta_{\theta^{*}})> \epsilon\Bigr\} \rightarrow 0
\quad n \rightarrow \infty\, ,
\]
for $\epsilon >0$. To complete the proof we simply use the fact that
 convergence in law to a Dirac mass is  equivalent to convergence in probability to a constant random variable, that is
 use the equivalent
assertion \eqref{conv_proprob} of Proposition \ref{prop_schwartz}
\begin{equation}
\label{eq_conv}
d_{Pr^{2}\Theta}\Bigl((\pi^{\alpha}_{n})_{*}P_{\theta^{*}}^{\infty},\delta_{\delta_{\theta^{*}}}\Bigr)\rightarrow 0 \quad n \rightarrow \infty \, .
\end{equation}
 Now the proof is very simple.
Indeed,  from the triangle inequality we have
\[
d_{Pr^{2}\Theta}\Bigl((\pi^{\alpha}_{n})_{*}P_{\theta^{*}}^{\infty},\delta_{\delta_{\theta}}\Bigr)
\geq d_{Pr^{2}\Theta}\Bigl(\delta_{\delta_{\theta^{*}}},\delta_{\delta_{\theta}}\Bigr)-d_{Pr^{2}\Theta}\Bigl((\pi^{\alpha}_{n})_{*}P_{\theta^{*}}^{\infty},\delta_{\delta_{\theta^{*}}}\Bigr)
\]
and, by two applications of Proposition \ref{prop_delta},
we have
\begin{eqnarray*}
d_{Pr^{2}\Theta}\Bigl(\delta_{\delta_{\theta^{*}}},\delta_{\delta_{\theta}}\Bigr)&= & \min{\Bigl(d_{Pr\Theta}\Bigl(\delta_{\theta^{*}},\delta_{\theta}\Bigr)
, 1\Bigr)}\\
&= & \min{\Bigl(\min{\bigl(d(\theta^{*},\theta),1\bigr)}
, 1\Bigr)}\\
&= & \min{\bigl(d(\theta^{*},\theta),1\bigr)}\, .
\end{eqnarray*}
Therefore, since  $(\delta^{n}_{\theta})_{*}P_{\theta^{*}}^{\infty}=\delta_{\delta_{\theta}}$, the convergence
\eqref{eq_conv} implies that
\[d_{Pr^{2}\Theta}\Bigl((\pi^{\alpha}_{n})_{*}P_{\theta^{*}}^{\infty}, (\delta^{n}_{\theta})_{*}P_{\theta^{*}}^{\infty}\Bigr)
\rightarrow   \min{\bigl(d(\theta^{*},\theta),1\bigr)}\, ,\quad n \rightarrow \infty\,  .\]

Finally, since  $d_{Pr\Theta}\leq d_{tv}$,  it follows from
\eqref{eq_close} that
\begin{equation*}
\label{eq_closebis}
d_{Pr\Theta}(\pi^{\alpha},\delta_{\theta}) \leq \alpha.
\end{equation*}
Then,  for any $\delta>0$, if we  restrict $\alpha$ so that $\alpha < \min{(\delta,\rho)}$, it follows that
$ d_{tv}(\pi^{\alpha},\delta_{\theta})   <\rho$
and
$ d_{Pr\Theta}(\pi^{\alpha},\delta_{\theta})   <\delta,$
so that
\begin{equation}
\label{eq_admissbis}
\bigl(\pi^{\alpha},\delta_{\theta},P_{\theta^{*}},P_{\theta^{*}}\bigr) \in \mathcal{Z}_{\rho}(\theta^{*})\, ,
\end{equation}
 \begin{equation}
\label{eq_delta}
d_{Pr\Theta}(\pi^{\alpha},\delta_{\theta}) < \delta\, .
\end{equation}
Let $D:=\sup{\{d(\theta_{1},\theta_{2}): \theta_{1},\theta_{2} \in \Theta\} }$ denote the diameter of $\Theta$.
Then it follows from the triangle inequality that, for any $\epsilon >0$, there exists a $\theta \in \Theta$ such that
$d(\theta^{*},\theta) \geq \frac{D}{2}-\epsilon$.
Consequently, for any  $\bar{\epsilon} <\min{(\frac{D}{2},1)}  $,
  no matter how small $\delta$ is, there is an $\alpha>0$ such that, in addition to \eqref{eq_admissbis} and \eqref{eq_delta},
  we have
\begin{equation*}
\label{eq_nonrb}
d_{Pr^{2}\Theta}\Bigl((\pi^{\alpha}_{n})_{*}P_{\theta^{*}}^{\infty}, (\delta^{n}_{\theta})_{*}P_{\theta^{*}}^{\infty}\Bigr)
>  \bar{\epsilon}\,,
\end{equation*}
for large enough $n$.
Consequently, the assertion is proved.

\subsection{Proof of Theorem \ref{thm_main3}}
As in the proof of Theorem \ref{thm_main}, we establish the
assertion with respect to the modified form of qualitative robustness defined by \eqref{def_prpr},
 and since this form is weaker it implies
the assertion.
  It follows from the definition of the packing numbers that, for $\epsilon >0$, there is a
packing  $\{\theta_{i},i=1,..,\mathcal{M}_{2\epsilon}(\Theta)\}$ and therefore  the collection
of open balls $B_{\epsilon}(\theta_{i}),\, i=1,..,\mathcal{M}_{2\epsilon}(\Theta)$ is a disjoint union. Denoting
$\mathcal{N}_{2\epsilon}:=\mathcal{N}_{2\epsilon}(\Theta)$ and $\mathcal{M}_{2\epsilon}:=\mathcal{M}_{2\epsilon}(\Theta)$,
  we therefore obtain
\begin{eqnarray*}
1&=& \pi(\Theta)\\
&\geq & \pi\bigl(\cup_{i=1}^{\mathcal{M}_{2\epsilon}}{B_{\epsilon}(\theta_{i})}\bigr)\\
&= & \sum_{i=1}^{\mathcal{M}_{2\epsilon}}{\pi\bigl(B_{\epsilon}(\theta_{i})\bigr)}\\
&\geq & \mathcal{M}_{2\epsilon}\min_{i=1,\mathcal{M}_{2\epsilon}}{\pi\bigl(B_{\epsilon}(\theta_{i})\bigr)}\, .
\end{eqnarray*}
 Consequently, since \eqref{eq_kolm} implies $\mathcal{M}_{2\epsilon} \geq \mathcal{N}_{2\epsilon}$, there exists a point $\theta^{*}\in \Theta$ such that
\begin{equation}
\label{eq_N}
\pi\bigl(B_{\epsilon}(\theta^{*})\bigr) \leq \frac{1}{\mathcal{N}_{2\epsilon}} \, .
\end{equation}
 Let  $B_{\epsilon}:=B_{\epsilon}(\theta^{*})$ denote the open ball about $\theta^{*}$ and
let $B_{\epsilon}^{c}$ denote its complement. Let $\pi^{\epsilon} \in \mathcal{M}(\Theta)$, defined by
  \[\pi^{\epsilon}(B):=\frac{\pi(B^{c}_{\epsilon}\cap B)}{\pi(B^{c}_{\epsilon})},\,  B \in \mathcal{B}(\Theta)\, ,\]
denote the normalization of the restriction of $\pi$  to $B^{c}_{\epsilon}$ which,
by the inequality \eqref{eq_N}, is well defined.
Since
$ \pi=\pi(B^{c}_{\epsilon})\pi^{\epsilon}+\pi|_{B_{\epsilon}}$
it follows that
$\pi-\pi^{\epsilon}= \pi|_{B_{\epsilon}}- \pi(B_{\epsilon})\pi^{\epsilon}
$
so that we obtain
\[  d_{tv}(\pi^{\epsilon}, \pi) \leq \pi(B_{\epsilon}) \leq \frac{1}{\mathcal{N}_{2\epsilon}}\]
from which we obtain
\begin{equation}
\label{eq_close3bis}
d_{Pr\Theta}(\pi^{\epsilon},\pi) \leq \frac{1}{\mathcal{N}_{2\epsilon}}\, .
\end{equation}
In particular, when $ \frac{1}{\mathcal{N}_{2\epsilon}} < \rho$,
 we obtain
\begin{equation*}
\label{eq_admiss8bis}
 \pi^{\epsilon} \in B^{tv}_{\rho}(\pi)
\end{equation*}
and therefore
\begin{equation*}
\label{eq_admisbis}
 \bigl(\pi, \pi^{\epsilon},P_{\theta^{*}},P_{\theta^{*}}\bigr) \in Z_{\rho}(\pi)\,  .
\end{equation*}
That is, when $ \frac{1}{\mathcal{N}_{2\epsilon}} < \rho$, the point
$\bigl(\pi, \pi^{\epsilon},P_{\theta^{*}},P_{\theta^{*}}\bigr) \in Z_{\rho}(\pi)$.

For the prior $\pi^{\epsilon}$, let
$\pi^{\epsilon}_{n}:(X^{\infty},P_{\theta^{*}}^{\infty}) \rightarrow \mathcal{M}(\Theta)$, defined
by $\pi^{\epsilon}_{n}(x^{\infty}):=\pi^{\epsilon}_{x^{n}},\, x^{\infty} \in X^{\infty}$,
denote the corresponding sequence  of  posterior random variables,
and let
$(\pi^{\epsilon}_{n})_{*}P_{\theta^{*}}^{\infty} \in \mathcal{M}^{2}(\Theta)$
denote its induced sequence of laws.
Since the assumptions of Schwartz' Corollary \ref{cor_schwartz} are satisfied and $\pi$ has Kullback-Leibler support at
$\theta^{*}$, we can apply the
assertion \eqref{conv_proprob} of Proposition \ref{prop_schwartz} to the sequence of posterior laws
 $(\pi_{n})_{*}P_{\theta^{*}}^{\infty}$ corresponding to $\pi$:
\begin{equation}
\label{eq_conv2}
d_{Pr^{2}\Theta}\Bigl((\pi_{n})_{*}P_{\theta^{*}}^{\infty},\delta_{\delta_{\theta^{*}}}\Bigr)\rightarrow 0 \quad n \rightarrow \infty \, .
\end{equation}
From the triangle inequality we have
\begin{equation}
\label{eq_triangle}
d_{Pr^{2}\Theta}\Bigl((\pi_{n})_{*}P_{\theta^{*}}^{\infty},(\pi^{\epsilon}_{n})_{*}P_{\theta^{*}}^{\infty}\Bigr)
\geq d_{Pr^{2}\Theta}\Bigl((\pi^{\epsilon}_{n})_{*}P_{\theta^{*}}^{\infty}, \delta_{\delta_{\theta^{*}}}\Bigr)-d_{Pr^{2}\Theta}\Bigl((\pi_{n})_{*}P_{\theta^{*}}^{\infty},\delta_{\delta_{\theta^{*}}}\Bigr)\,,
\end{equation}
so to lower bound the lefthand side it is sufficient in the limit to  lower bound
the first term on the right. To that end,
 we use
a quantitative  version of the partial  converse~\cite[Thm.~11.3.5]{Dudley:2002} of convergence in probability implies convergence in law, valid when
the convergence in law is to a Dirac mass.
 Indeed, if we denote the Ky Fan metric determined from the measure $P_{\theta^{*}}^{\infty}$ by $\alpha_{\theta^{*}}$, Lemma \ref{lem_kyfanprokhorov} asserts that
\begin{equation}
\label{eq_kf_pr}
d_{Pr^{2}\Theta}\Bigl((\pi^{\epsilon}_{n})_{*}P_{\theta^{*}}^{\infty}, \delta_{\delta_{\theta^{*}}}\Bigr)=\alpha_{\theta^{*}}(\pi^{\epsilon}_{n},\delta_{\theta^{*}})\, .
\end{equation}
To evaluate the Ky Fan distance on the righthand side,
first observe that since $\pi^{\epsilon}$ has support contained
in the closed set $B^{c}_{\epsilon}$, it follows from  Schervish \cite[Thm.~1.31]{Schervish_theory} that
$\pi^{\epsilon}_{x^{n}} $ also has support contained in $B^{c}_{\epsilon}$ a.e $P_{\theta^{*}}^{n}$.
Therefore, if we define $B_{0}:=\{\theta^{*}\}$ and $B_{r}:=B_{r}(\theta^{*})$, it follows that
$B_{0}^{r}=B_{r}$, so that
\begin{equation*}
\label{eq_xx}
\pi^{\epsilon}_{x^{n}}(B_{0}^{r})=0\, ,\quad  a.e.~P_{\theta^{*}}^{n},\quad  r < \epsilon
\end{equation*}
and
\begin{equation*}
\label{eq_yy}
(\delta_{\theta^{*}})_{x^{n}}(B_{0})=1\, ,\quad  a.e.~P_{\theta^{*}}^{n}\, .
\end{equation*}
It follows from  Lemma \ref{lem_lb} that
\begin{eqnarray*}
 d_{Pr\Theta}\bigl(\pi^{\epsilon}_{x^{n}},(\delta_{\theta^{*}})_{x^{n}}\bigr)
& \geq & \min{(\epsilon, 1)}\quad  a.e.~P_{\theta^{*}}^{\infty}\, ,
\end{eqnarray*}
and, since $\epsilon \leq 1$, we obtain
\begin{equation*}
\label{eq_zz}
P_{\theta^{*}}^{\infty}\Bigl(d_{Pr\Theta}\bigl(\pi^{\epsilon}_{x^{n}},(\delta_{\theta^{*}})_{x^{n}}\bigr)\geq  \epsilon \Bigr)=1\, .
\end{equation*}
Therefore, by the definition
\eqref{def_kyfan} of the Ky Fan metric, we obtain
 $\alpha_{\theta^{*}}(\pi^{\epsilon}_{n},\delta_{\theta^{*}}) \geq \epsilon$ and, by the identity
\eqref{eq_kf_pr}, we conclude that
\[d_{Pr^{2}\Theta}\Bigl((\pi^{\epsilon}_{n})_{*}P_{\theta^{*}}^{\infty}, \delta_{\delta_{\theta^{*}}}\Bigr)\geq \epsilon  \, .\]
Consequently, from the triangle inequality \eqref{eq_triangle} and the convergence \eqref{eq_conv2},
we  conclude, for any $\acute{\epsilon}>0$,   that for  large enough $n$ we have
\begin{equation}
\label{eq_dddd}
d_{Pr^{2}\Theta}\Bigl((\pi_{n})_{*}P_{\theta^{*}}^{\infty},(\pi^{\epsilon}_{n})_{*}P_{\theta^{*}}^{\infty}\Bigr) \geq \epsilon -\acute{\epsilon}\, .
\end{equation}
Consequently, if this Bayesian inference is qualitatively robust, then for $\epsilon>0$, it follows from
\eqref{eq_dddd} and  \eqref{eq_close3bis}  that
$\delta < \frac{1}{\mathcal{N}_{2\epsilon}}$. The requirement that perturbations be admissible, that is determine members in $\mathcal{Z}_{\rho}(\pi)$, implies that $\delta < \rho$.

\section{Appendix}

\subsection{Schwartz' Theorem and the convergence of random measures}
\label{sec_schwartzfacts}
It will be useful to
 express the assertion of Corollary \ref{cor_schwartz} and some of its consequences in
terms of  the  convergence of measures and random measures. To that end, recall the  notation
$\mathcal{M}^{2}(\Theta):=\mathcal{M}(\mathcal{M}(\Theta))$, and
consider the corresponding sequence of random variables
$\pi_{n}:(X^{\infty},P_{\theta^{*}}^{\infty}) \rightarrow \mathcal{M}(\Theta)$, defined
by $\pi_{n}(x^{\infty}):=\pi_{x^{n}},\, x^{\infty} \in X^{\infty}$,
and its induced sequence of laws
$(\pi_{n})_{*}P_{\theta^{*}}^{\infty} \in \mathcal{M}^{2}(\Theta)$.
Note especially that
 $\delta_{\delta_{\theta^{*}}}$ is the Dirac mass in $\mathcal{M}^{2}(\Theta)$
situated at the Dirac mass $\delta_{\theta^{*}}$ in $\mathcal{M}(\Theta)$ situated at $\theta^{*}$.

\begin{prop}
\label{prop_schwartz}
The assertion of Corollary \ref{cor_schwartz}
 is equivalent to
\begin{equation*}
\label{con_as} \pi_{x^{n}} \mapsto  \delta_{\theta^{*}} \quad   a.e.~P^{\infty}_{\theta^{*}}\, ,
\end{equation*}
where $\mapsto$ is weak convergence.
This in turn implies  that
\begin{equation}
\label{conv_prob}
P^{\infty}_{\theta^{*}}\Bigl\{ d_{Pr\Theta}(\pi_{n},\delta_{\theta^{*}})> \epsilon\Bigr\} \rightarrow 0
\quad n \rightarrow \infty\, ,
\end{equation}
for $\epsilon >0$,
which is equivalent to
\begin{equation}
\label{conv_proprob}
d_{Pr^{2}\Theta}\Bigl((\pi_{n})_{*}P_{\theta^{*}}^{\infty},\delta_{\delta_{\theta^{*}}}\Bigr)\rightarrow 0 \quad n \rightarrow \infty \, ,
\end{equation}
where $d_{Pr^{2}\Theta}$ is the Prokhorov metric on $\mathcal{M}^{2}(\Theta)$ defined with respect to the Prokhorov metric $d_{Pr\Theta}$ on $\mathcal{M}(\Theta)$.
\end{prop}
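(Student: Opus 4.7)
The plan is to prove the chain \emph{(Corollary's assertion)} $\Leftrightarrow$ \emph{(a.s.\ weak convergence to $\delta_{\theta^{*}}$)} $\Rightarrow$ \eqref{conv_prob} $\Leftrightarrow$ \eqref{conv_proprob}. For the first equivalence I would invoke the Portmanteau characterization: a sequence of Borel probability measures on the separable metric space $\Theta$ converges weakly to the Dirac mass $\delta_{\theta^{*}}$ if and only if $\pi_{n}(U) \to 1$ for every open neighborhood $U$ of $\theta^{*}$; since $\Theta$ is separable, the countable family $\{B_{1/k}(\theta^{*}) : k \in \mathbb{N}\}$ is a neighborhood basis, so weak convergence reduces to a countable list of conditions. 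For the forward direction I would apply Corollary \ref{cor_schwartz} to each $B_{1/k}(\theta^{*})$ to obtain $P^{\infty}_{\theta^{*}}$-null sets $N_{k}$ and set $N := \bigcup_{k}N_{k}$, which remains null; outside $N$ one has $\pi_{x^{n}} \to \delta_{\theta^{*}}$ weakly. The reverse direction is immediate, since every measurable neighborhood of $\theta^{*}$ contains some $B_{1/k}(\theta^{*})$.

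Once weak convergence holds almost surely, the scalar sequence $d_{Pr\Theta}(\pi_{n}, \delta_{\theta^{*}})$ tends to $0$ a.e.\ $P^{\infty}_{\theta^{*}}$, hence also in $P^{\infty}_{\theta^{*}}$-probability, which is precisely \eqref{conv_prob}. For the equivalence \eqref{conv_prob} $\Leftrightarrow$ \eqref{conv_proprob} I would use that the Ky Fan metric $\alpha_{\theta^{*}}$ metrizes convergence in probability of $\mathcal{M}(\Theta)$-valued random variables on $(X^{\infty}, P^{\infty}_{\theta^{*}})$ by Dudley \cite[Thm.~9.2.2]{Dudley:2002}; applied to the pair $(\pi_{n}, \delta_{\theta^{*}})$ with $\delta_{\theta^{*}}$ viewed as the constant $\mathcal{M}(\Theta)$-valued random variable whose law is $\delta_{\delta_{\theta^{*}}}$, condition \eqref{conv_prob} is equivalent to $\alpha_{\theta^{*}}(\pi_{n}, \delta_{\theta^{*}}) \to 0$. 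Dudley's inequality $d_{Pr^{2}\Theta}\bigl((\pi_{n})_{*}P^{\infty}_{\theta^{*}}, \delta_{\delta_{\theta^{*}}}\bigr) \leq \alpha_{\theta^{*}}(\pi_{n}, \delta_{\theta^{*}})$ \cite[Thm.~11.3.5]{Dudley:2002} then yields \eqref{conv_proprob}. The reverse implication follows from Portmanteau applied in $(\mathcal{M}(\Theta), d_{Pr\Theta})$: for each $\epsilon > 0$ the open ball $B_{\epsilon}(\delta_{\theta^{*}})$ satisfies $\delta_{\delta_{\theta^{*}}}\bigl(B_{\epsilon}(\delta_{\theta^{*}})\bigr) = 1$, so weak convergence of the laws forces $(\pi_{n})_{*}P^{\infty}_{\theta^{*}}\bigl(B_{\epsilon}(\delta_{\theta^{*}})\bigr) \to 1$, i.e.\ $P^{\infty}_{\theta^{*}}\{d_{Pr\Theta}(\pi_{n}, \delta_{\theta^{*}}) \geq \epsilon\} \to 0$.

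The only nontrivial bookkeeping is in the first step: Corollary \ref{cor_schwartz} supplies, a priori, one $P^{\infty}_{\theta^{*}}$-null set for each measurable neighborhood, and these cannot be amalgamated without countability. The separability of $\Theta$ — hence the existence of a countable neighborhood basis at $\theta^{*}$ — is what enables this consolidation and is the main point I would emphasize; the remainder is a routine application of Portmanteau on $\Theta$ and on $\mathcal{M}(\Theta)$ together with Dudley's comparison of the Ky Fan and Prokhorov metrics.
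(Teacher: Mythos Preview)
Your proposal is correct and follows essentially the same route as the paper: Portmanteau for the first equivalence, a.s.\ convergence implies convergence in probability for the implication, and the equivalence of convergence in probability to a constant with weak convergence of laws to a Dirac mass for the last step. You are in fact more careful than the paper on the first step: the paper simply writes $\liminf_{n}\pi_{x^{n}}(O)\geq \delta_{\theta^{*}}(O)$ for all open $O$ a.e.\ without explaining how the uncountably many null sets are consolidated, whereas you explicitly reduce to the countable basis $\{B_{1/k}(\theta^{*})\}$; for the final equivalence the paper invokes Dudley \cite[Prop.~11.1.3]{Dudley:2002} directly rather than passing through the Ky Fan metric, but your argument via Dudley's inequality plus Portmanteau on $\mathcal{M}(\Theta)$ is equally valid.
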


\begin{proof}
Let $\mathcal{O}$ denote the open sets in $\Theta$ and
 $\mathcal{O}_{\theta^{*}}\subset \mathcal{O}$ denote the open neighborhoods of $\theta^{*}$.   Then,
under the conditions of Corollary \ref{cor_schwartz},  for $O \in \mathcal{O}_{\theta^{*}}$, it follows that
\[ \pi_{x^{n}}(O) \rightarrow 1 \quad  n\rightarrow \infty,\quad   a.e.~P^{\infty}_{\theta^{*}}\, .\]
Since $\delta_{\theta^{*}}(O)=1, \, O \in \mathcal{O}_{\theta^{*}}$ and
$\delta_{\theta^{*}}(O)=0,\,  O \in   \mathcal{O}\setminus \mathcal{O}_{\theta^{*}}$  it easily follows that
 \[ \lim \inf_{n}\pi_{x^{n}}(O) \geq \delta_{\theta^{*}}(O),\,\, \forall O \in \mathcal{O},  \quad   a.e.~P^{\infty}_{\theta^{*}}\, .\]
which, by the Portmanteau theorem \cite[Thm.~11.1.1]{Dudley:2002}, is equivalent to
\[ \pi_{x^{n}} \mapsto  \delta_{\theta^{*}} \quad   a.e.~P^{\infty}_{\theta^{*}}\, .\]
where $\mapsto$ denotes weak convergence.

Now consider the corresponding sequence of random variables
$\pi_{n}:(X^{\infty},P_{\theta^{*}}^{\infty}) \rightarrow \mathcal{M}(\Theta)$, defined
by $\pi_{n}(x^{\infty}):=\pi_{x^{n}},\, x^{\infty} \in X^{\infty}$,
and its induced sequence of laws
$(\pi_{n})_{*}P_{\theta^{*}}^{\infty} \in \mathcal{M}^{2}(\Theta)$.
Then $\pi_{x^{n}} \mapsto  \delta_{\theta^{*}}\,   a.e.~P^{\infty}_{\theta^{*}}$
is equivalent to
\[ \pi_{n} \mapsto  \delta_{\theta^{*}} \quad  a.s.~P^{\infty}_{\theta^{*}}\, .\]
Since $\Theta$ is a separable metric space it follows that $\mathcal{M}(\Theta)$ equipped with the Prokhorov metric
is a separable metric space. Since a.s.~convergence implies convergence in probability for random variables with values
in a separable metric space, it follows that
\[\pi_{n} \mapsto  \delta_{\theta^{*}}\, \text{in}\, P^{\infty}_{\theta^{*}}-\text{probability}\, ,\]
that is,
\[P^{\infty}_{\theta^{*}}\Bigl\{ d_{Pr\Theta}(\pi_{n},\delta_{\theta^{*}})> \epsilon\Bigr\} \rightarrow 0
\quad n \rightarrow \infty\, .
\]

Since $\mathcal{M}(\Theta)$ is a separable metric space it follows that $\mathcal{M}^{2}(\Theta)$ equipped
with the Prokhorov metric is also a separable metric space.
Therefore, since on
separable metric spaces convergence in probability to a constant valued random variable is equivalent
to the weak convergence of the corresponding set of laws to the Dirac mass situated at that value,
see e.g.~Dudley \cite[Prop.~11.1.3]{Dudley:2002}, it follows that the convergence in probability,
$\pi_{n} \rightarrow  \delta_{\theta^{*}}\, \text{in}\, P^{\infty}_{\theta^{*}}-\text{probability}$,
 is equivalent to
the corresponding convergence of laws
\[ (\pi_{n})_{*}P_{\theta^{*}}^{\infty} \mapsto \delta_{\delta_{\theta^{*}}}\quad  n \rightarrow \infty\, .\]
Finally, since the Proprokhorov metric $d_{Pr^{2}\Theta}$ on $\mathcal{M}^{2}(\Theta)$ metrizes the weak topology on $ \mathcal{M}^{2}(\Theta)=
\mathcal{M}(\mathcal{M}(\Theta))$,
it follows that the latter is equivalent to
\[d_{Pr^{2}\Theta}\Bigl((\pi_{n})_{*}P_{\theta^{*}}^{\infty},\delta_{\delta_{\theta^{*}}}\Bigr)\rightarrow 0 \quad n \rightarrow \infty \, .
\]

\end{proof}

\subsection{Some Prokhorov Geometry}
\label{sec_appendix}
We establish a basic mechanism  to  bound from below
the  Prokhorov distance between two measures based on the values of the measures on the neighborhood of a single set.
\begin{lem}
\label{lem_lb}
Let $Z$ be a metric space
 and consider  the space $\mathcal{M}(Z)$ of Borel probability
measures equipped with the
 Prokhorov metric.
Consider $\mu \in \mathcal{M}(Z)$ and suppose that there exists a set $B \in \mathcal{B}(Z)$
  and $\alpha, \delta \geq 0$ such that
\[ \mu(B^{\epsilon}) \leq \delta, \quad \epsilon < \alpha\, .\]
Then, for any $\mu' \in \mathcal{M}(Z)$, we have
\[d_{Pr}(\mu,\mu') \geq \min{\bigl( \alpha, \mu'(B)-\delta \bigr)}\, .\]
\end{lem}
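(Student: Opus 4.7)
The plan is to read the definition of the Prokhorov metric backwards: it directly gives an upper bound on $\mu'(B)$ in terms of $\mu(B^\epsilon)$, and the hypothesis controls the latter as long as $\epsilon<\alpha$. Split into cases according to whether $d_{Pr}(\mu,\mu')\ge \alpha$ or not.

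If $d_{Pr}(\mu,\mu')\ge \alpha$, the asserted bound is immediate because its right-hand side is at most $\alpha$. So assume $d_{Pr}(\mu,\mu')<\alpha$, and fix any $\epsilon'$ with $d_{Pr}(\mu,\mu')<\epsilon'<\alpha$. By the definition \eqref{def_Pr} of the Prokhorov metric together with its symmetry (Dudley \cite[Thm.~11.3.1]{Dudley:2002}), one has
\[
\mu'(A)\ \le\ \mu(A^{\epsilon'})+\epsilon',\qquad A\in\mathcal{B}(Z).
\]
Specializing to $A=B$ and using the hypothesis $\mu(B^{\epsilon'})\le \delta$ (which applies since $\epsilon'<\alpha$) yields $\mu'(B)\le \delta+\epsilon'$, hence $\epsilon'\ge \mu'(B)-\delta$. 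Letting $\epsilon'\downarrow d_{Pr}(\mu,\mu')$ gives $d_{Pr}(\mu,\mu')\ge \mu'(B)-\delta$, and combining with the first case produces $d_{Pr}(\mu,\mu')\ge \min(\alpha,\mu'(B)-\delta)$.

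There is no genuine obstacle; the only subtlety is orienting the Prokhorov inequality correctly. The definition \eqref{def_Pr} as written bounds $\mu_1(A)$ by $\mu_2(A^\epsilon)+\epsilon$, but I need the bound in the opposite direction (controlling $\mu'(B)$ by $\mu(B^{\epsilon'})$), which is exactly what symmetry of $d_{Pr}$ supplies. The compatibility of the open $\epsilon$-neighborhood convention in \eqref{def_Pr} with the strict inequality $\epsilon<\alpha$ in the hypothesis makes the specialization $A=B$ go through without any closure/interior adjustment, and no separability or metrizability beyond what is already assumed is needed.
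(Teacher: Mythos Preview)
Your proof is correct and follows essentially the same approach as the paper's: case-split on whether $d_{Pr}(\mu,\mu')\ge\alpha$, and in the remaining case apply the Prokhorov inequality with $A=B$ to obtain $\mu'(B)\le\delta+\epsilon$. Your version is marginally more careful in choosing $\epsilon'>d_{Pr}(\mu,\mu')$ and passing to the limit (and in invoking symmetry explicitly), whereas the paper applies the inequality directly at $\epsilon=d_{Pr}(\mu,\mu')$.
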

\begin{proof}
If $d_{Pr}(\mu_{1},\mu_{2}) \geq \alpha$  the assertion is proved, so let us assume that
$d_{Pr}(\mu_{1},\mu_{2}) <  \alpha$. Then, denoting  $d^{*}:=d_{Pr}(\mu_{1},\mu_{2})$, it follows from the assumption
that $ \mu(A^{d^{*}}) \leq \delta $, so that
\begin{eqnarray*}
\mu'(A)& \leq& \mu(A^{d^{*}}) +d^{*}\\
&\leq & \delta +d^{*}
\end{eqnarray*}
from which we conclude that
$ \mu'(A)-\delta \leq d^{*}$.
Therefore, either $d_{Pr}(\mu_{1},\mu_{2}) \geq \alpha$  or $d_{Pr}(\mu_{1},\mu_{2}) \geq  \mu'(A)-\delta $,
proving the assertion.
\end{proof}

\begin{lem}
\label{lem_kyfanprokhorov}
Let $S$ be a separable metric space. Then, for  an $S$-valued random variable $X$ we have
\[ \alpha(X,s)= d_{Pr}(\mathcal{L}(X),\delta_{s})\]
where $\alpha$ is the Ky Fan metric and $s$ denotes the random variable with constant value $s$.
\end{lem}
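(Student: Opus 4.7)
The plan is to unpack both definitions and reduce each to a simple tail inequality for the scalar random variable $d(X,s)$.

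First I would analyze the Prokhorov distance $d_{Pr}(\mathcal{L}(X),\delta_{s})$. Since $\delta_{s}(A^{\epsilon})$ is either $0$ or $1$, the condition $\mathcal{L}(X)(A)\leq \delta_{s}(A^{\epsilon})+\epsilon$ is automatic whenever $s\in A^{\epsilon}$, and is nontrivial precisely when $A$ is contained in $\{x:d(x,s)\geq \epsilon\}$. The worst-case $A$ is then $\{x:d(x,s)\geq \epsilon\}$ itself, reducing this direction to the condition $P(d(X,s)\geq \epsilon)\leq \epsilon$. For the symmetric direction $\delta_{s}(A)\leq \mathcal{L}(X)(A^{\epsilon})+\epsilon$, only sets $A\ni s$ are nontrivial, and monotonicity in $A$ makes $A=\{s\}$ the extremal choice, giving $\mathcal{L}(X)(\{x:d(x,s)<\epsilon\})\geq 1-\epsilon$, i.e.\ the same condition $P(d(X,s)\geq \epsilon)\leq \epsilon$. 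Therefore
\[
d_{Pr}(\mathcal{L}(X),\delta_{s})=\inf\bigl\{\epsilon\geq 0:P(d(X,s)\geq \epsilon)\leq \epsilon\bigr\}\, ,
\]
whereas by \eqref{def_kyfan},
\[
\alpha(X,s)=\inf\bigl\{\epsilon\geq 0:P(d(X,s)>\epsilon)\leq \epsilon\bigr\}\, .
\]

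Next I would show these two infima coincide. Let $F(\epsilon):=P(d(X,s)>\epsilon)$ and $G(\epsilon):=P(d(X,s)\geq \epsilon)$. Since $F\leq G$, any $\epsilon$ admissible for the Prokhorov infimum is admissible for the Ky Fan infimum, so $\alpha(X,s)\leq d_{Pr}(\mathcal{L}(X),\delta_{s})$. Conversely, if $F(\epsilon)\leq \epsilon$ and $\epsilon'>\epsilon$, then
\[
G(\epsilon')=P\bigl(d(X,s)\geq \epsilon'\bigr)\leq P\bigl(d(X,s)>\epsilon\bigr)=F(\epsilon)\leq \epsilon<\epsilon'\, ,
\]
so $\epsilon'$ is admissible for the Prokhorov infimum. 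Letting $\epsilon'\downarrow \epsilon$ and then taking the infimum over $\epsilon$ yields $d_{Pr}(\mathcal{L}(X),\delta_{s})\leq \alpha(X,s)$, proving equality.

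The role of the separability of $S$ is only to guarantee that $\alpha(X,s)$ is a well-defined pseudometric (via Dudley \cite[Thm.~9.2.2]{Dudley:2002}) and that the law $\mathcal{L}(X)\in \mathcal{M}(S)$ is available; no further structural use of separability enters. The main obstacle, if there is one, is the careful bookkeeping of strict versus non-strict inequalities arising from the definition $A^{\epsilon}=\{x':d(x,x')<\epsilon\text{ for some }x\in A\}$, which is precisely what the right-continuity argument above resolves.
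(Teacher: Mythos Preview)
Your proof is correct. Both you and the paper reduce the problem to tail inequalities for the scalar random variable $d(X,s)$ via the singleton test set $\{s\}$, but the executions differ. The paper establishes only one inequality by direct computation---applying the Prokhorov definition at $B_0=\{s\}$ to obtain $P(d(X,s)\geq\rho)\leq\rho$ and hence $P(d(X,s)>\rho)\leq\rho$, giving $\alpha(X,s)\leq d_{Pr}(\mathcal{L}(X),\delta_s)$---and then invokes Dudley \cite[Thm.~11.3.5]{Dudley:2002} (the general bound $d_{Pr}(\mathcal{L}(Z),\mathcal{L}(W))\leq\alpha(Z,W)$) for the reverse inequality. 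You instead characterize $d_{Pr}(\mathcal{L}(X),\delta_s)$ completely as $\inf\{\epsilon:P(d(X,s)\geq\epsilon)\leq\epsilon\}$ by analyzing both Prokhorov constraints, and then match this against the Ky Fan infimum via a clean right-continuity argument on $\epsilon$. Your route is slightly longer but fully self-contained, requiring no external coupling-type result; the paper's is terser but leans on the Dudley citation.
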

\begin{proof}
Let us denote $\alpha:=\alpha(X,s)$ and
$\rho:=d_{Pr}(\mathcal{L}(X),\delta_{s})$.  Define the set $B_{0}:=\{s\}$ and $B_{r}:=B_{r}(s), r >0$ and observe
that $B_{0}^{r}=B_{r}, r >0$. Therefore, by the definition of $\rho$ we have
\[\mathcal{L}(s)(B_{0}) \leq \mathcal{L}(X)(B^{\rho}_{0}) +\rho\]
and since $\mathcal{L}(s)(B_{0}) =1$ we obtain
\[ \mathcal{L}(X)(B^{\rho}_{0}) \geq 1-\rho\] from which we obtain
$ P( d(X,s) \geq \rho) \leq \rho\, .$
Since this implies that
\[P( d(X,s) > \rho) \leq  P( d(X,s) \geq \rho) \leq \rho \, \]
we conclude that $\rho \leq \alpha$. Since
 Dudley \cite[Thm.~11.3.5]{Dudley:2002} asserts that
$\alpha \leq \rho$, the assertion follows.
\end{proof}

\begin{prop}
\label{prop_delta}
\[d_{Pr}(\delta_{x_{1}},\delta_{x_{2}}) = \min{\bigl(1, d(x_{1},x_{2})\bigr)}\]
\end{prop}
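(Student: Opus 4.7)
My plan is to establish the equality by proving two matching bounds. The first observation is that the Prokhorov metric is always bounded above by $1$: taking $\epsilon = 1$ in the infimum \eqref{def_Pr}, the inequality $\mu_1(A) \leq \mu_2(A^1) + 1$ holds trivially since the right-hand side is at least $1$. So it suffices to prove $d_{Pr}(\delta_{x_1}, \delta_{x_2}) \leq d(x_1, x_2)$ for the upper bound and $d_{Pr}(\delta_{x_1}, \delta_{x_2}) \geq \min(1, d(x_1, x_2))$ for the lower bound; combining them with the universal $\leq 1$ bound yields the stated equality.

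For the upper bound I would fix any $\epsilon > d(x_1, x_2)$ and directly verify the defining inequality for all measurable $A$. There are two cases: if $x_1 \notin A$ then $\delta_{x_1}(A) = 0$ and the inequality is automatic; if $x_1 \in A$ then $d(x_1, x_2) < \epsilon$ forces $x_2 \in A^\epsilon$, so $\delta_{x_2}(A^\epsilon) = 1 \geq \delta_{x_1}(A)$. Hence every $\epsilon > d(x_1,x_2)$ lies in the set over which the infimum \eqref{def_Pr} is taken, which gives $d_{Pr}(\delta_{x_1}, \delta_{x_2}) \leq d(x_1, x_2)$.

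For the lower bound, the cleanest route is to invoke Lemma \ref{lem_lb} already established in the appendix. Take $\mu = \delta_{x_2}$, the distinguished set $B = \{x_1\}$, and parameters $\alpha = d(x_1, x_2)$ and $\delta = 0$. For any $\epsilon < d(x_1, x_2)$ the open $\epsilon$-neighborhood $B^\epsilon$ fails to contain $x_2$, so $\delta_{x_2}(B^\epsilon) = 0$, which verifies the hypothesis of the lemma. Applying the lemma with $\mu' = \delta_{x_1}$, and noting $\delta_{x_1}(\{x_1\}) = 1$, yields
\[
d_{Pr}(\delta_{x_2}, \delta_{x_1}) \geq \min\bigl(d(x_1, x_2),\, 1 - 0\bigr) = \min\bigl(d(x_1,x_2), 1\bigr),
\]
which is exactly the required lower bound.

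I expect no real obstacle here: the argument is essentially a direct unpacking of the Prokhorov definition on Dirac masses, where the point $x_1$ is the only conceivable test set to use. The only mild subtlety is remembering the universal upper bound of $1$, which is what forces the $\min$ to appear; Lemma \ref{lem_lb} handles this bookkeeping automatically on the lower-bound side.
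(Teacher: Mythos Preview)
Your proof is correct and follows essentially the same approach as the paper: both use the singleton $B=\{x_1\}$ as the test set for the lower bound and verify directly that any $\epsilon>d(x_1,x_2)$ satisfies the Prokhorov defining inequality for the upper bound. The only cosmetic differences are that you invoke Lemma~\ref{lem_lb} for the lower bound whereas the paper unpacks that same computation inline, and you state the universal bound $d_{Pr}\le 1$ explicitly while the paper leaves it implicit.
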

\begin{proof}
Consider the set $B:=\{x_{1}\}$. Then since $ B^{\epsilon}=B_{\epsilon}(x_{1})$, it follows that for $\epsilon < d(x_{1},x_{2})$  that
$x_{2} \notin B^{\epsilon}$. Consequently, since $\delta_{x_{1}}(B)=1$, the inequality
\[\delta_{x_{1}}(B)\leq \delta_{x_{2}}(B^{\epsilon})+\epsilon\] requires either
$\epsilon \geq 1$ or $x_{2} \in B^{\epsilon}$ which implies that $\epsilon \geq d(x_{1},x_{2})$.
Consequently, $d_{Pr}(\delta_{x_{1}},\delta_{x_{2}}) \geq \min{\bigl(1, d(x_{1},x_{2})\bigr)}$. To obtain equality,
suppose that
$d_{Pr}(\delta_{x_{1}},\delta_{x_{2}}) >  d(x_{1},x_{2})$. Then, for any $d'$  which satisfies
$d_{Pr}(\delta_{x_{1}},\delta_{x_{2}}) > d'>  d(x_{1},x_{2})$ there exists a  measurable set $B$ such that
\[\delta_{x_{1}}(B)> \delta_{x_{2}}(B^{d'})+d'\, \]
Consequently, $x_{1} \in B$, but $d' >  d(x_{1},x_{2})$ implies that $x_{2} \in B^{d'}$, which implies
the contradiction $1 > 1+d'$.
\end{proof}

\section*{Acknowledgments}
The authors gratefully acknowledges this work supported by  the Air Force Office of Scientific Research and the DARPA EQUiPS Program under
awards number  FA9550-12-1-0389 (Scientific Computation of Optimal Statistical Estimators) and number FA9550-16-1-0054 (Computational Information Games).

\addcontentsline{toc}{section}{Acknowledgments}


\addcontentsline{toc}{section}{References}
\bibliographystyle{plain}
\bibliography{refs}

\def\polhk#1{\setbox0=\hbox{#1}{\ooalign{\hidewidth
  \lower1.5ex\hbox{`}\hidewidth\crcr\unhbox0}}} \def\cprime{$'$}
  \def\polhk#1{\setbox0=\hbox{#1}{\ooalign{\hidewidth
  \lower1.5ex\hbox{`}\hidewidth\crcr\unhbox0}}}
\begin{thebibliography}{10}

\bibitem{AliprantisBorder:2006}
C.~D. Aliprantis and K.~C. Border.
\newblock {\em Infinite {D}imensional {A}nalysis: {A} {H}itchhiker's {G}uide}.
\newblock Springer, Berlin, third edition, 2006.

\bibitem{BarronEtAl:1999}
A.~Barron, M.~J. Schervish, and L.~Wasserman.
\newblock The consistency of posterior distributions in nonparametric problems.
\newblock {\em Ann. Statist.}, 27(2):536--561, 1999.

\bibitem{Basu_stability}
S.~Basu, S.~R. Jammalamadaka, and W.~Liu.
\newblock Stability and infinitesimal robustness of posterior distributions and
  posterior quantities.
\newblock {\em Journal of statistical planning and inference}, 71(1):151--162,
  1998.

\bibitem{Berk:1966}
R.~H. Berk.
\newblock Limiting behavior of posterior distributions when the model is
  incorrect.
\newblock {\em Ann. Math. Statist. 37 (1966), 51--58; correction, ibid},
  37:745--746, 1966.

\bibitem{Betro09}
B.~Betr{\`o}.
\newblock Numerical treatment of {B}ayesian robustness problems.
\newblock {\em Internat. J. Approx. Reason.}, 50(2):279--288, 2009.

\bibitem{Betro96}
B.~Betr{\`o} and A.~Guglielmi.
\newblock Numerical robust {B}ayesian analysis under generalized moment
  conditions.
\newblock In {\em Bayesian robustness ({R}imini, 1995)}, volume~29 of {\em IMS
  Lecture Notes Monogr. Ser.}, pages 3--20. Inst. Math. Statist., Hayward, CA,
  1996.
\newblock With a discussion by El{\'{\i}}as Moreno and a rejoinder by the
  authors.

\bibitem{Betro00}
B.~Betr{\`o} and A.~Guglielmi.
\newblock Methods for global prior robustness under generalized moment
  conditions.
\newblock In {\em Robust {B}ayesian analysis}, volume 152 of {\em Lecture Notes
  in Statist.}, pages 273--293. Springer, New York, 2000.

\bibitem{Betro94}
B.~Betr{\`o}, F.~Ruggeri, and M.~M{\polhk{e}}czarski.
\newblock Robust {B}ayesian analysis under generalized moments conditions.
\newblock {\em J. Statist. Plann. Inference}, 41(3):257--266, 1994.

\bibitem{Billingsley1}
P.~Billingsley.
\newblock {\em Convergence of Probability Measures}.
\newblock Wiley, New York, second edition, 1999.

\bibitem{Boente}
G.~Boente, R.~Fraiman, and V.~J. Yohai.
\newblock Qualitative robustness for stochastic processes.
\newblock {\em The Annals of Statistics}, pages 1293--1312, 1987.

\bibitem{ThanhGhattas14}
T.~Bui-Thanh and O.~Ghattas.
\newblock An analysis of infinite dimensional {B}ayesian inverse shape acoustic
  scattering and its numerical approximation.
\newblock {\em SIAM/ASA J. Uncertain. Quantif.}, 2(1):203--222, 2014.

\bibitem{Csiszar_divergence}
I.~Csisz{\'a}r.
\newblock {I}-divergence geometry of probability distributions and minimization
  problems.
\newblock {\em The Annals of Probability}, pages 146--158, 1975.

\bibitem{Cuevas}
A.~Cuevas.
\newblock Qualitative robustness in abstract inference.
\newblock {\em Journal of Statistical Planning and Inference}, 18(3):277--289,
  1988.

\bibitem{Cuevas2}
A.~Cuevas.
\newblock Comment on `{B}ounds on posterior expectations for density bounded
  class with constant bandwidth' by {S}ivaganesan.
\newblock {\em Journal of Statistical Planning and Inference}, 40(2):340--343,
  1994.

\bibitem{gonzalez1984definicion}
A.~Cuevas~Gonz{\'a}lez.
\newblock Una definici{\'o}n de robustez cualitativa en inferencia {B}ayesiana.
\newblock {\em Trabajos de Estad{\'\i}stica y de Investigaci{\'o}n Operativa},
  35(2):170--186, 1984.

\bibitem{DashtiStuart2011pde}
M.~Dashti and A.~M. Stuart.
\newblock Uncertainty quantification and weak approximation of an elliptic
  inverse problem.
\newblock {\em SIAM J. Numer. Anal.}, 49(6):2524--2542, 2011.

\bibitem{dellacherie1982probabilities}
C.~Dellacherie and P.-A. Meyer.
\newblock {\em Probabilities and Potential. B, Vol. 72 of North-Holland
  Mathematics Studies}.
\newblock North-Holland Publishing Co., Amsterdam, 1982.

\bibitem{Dudley:2002}
R.~M. Dudley.
\newblock {\em Real {A}nalysis and {P}robability}, volume~74 of {\em Cambridge
  Studies in Advanced Mathematics}.
\newblock Cambridge University Press, Cambridge, 2002.
\newblock Revised reprint of the 1989 original.

\bibitem{DupuisEllis}
P.~Dupuis and R.~S. Ellis.
\newblock {\em A Weak Convergence Approach to the Theory of Large Deviations},
  volume 902.
\newblock John Wiley \& Sons, 2011.

\bibitem{Gelman_inference}
A.~Gelman.
\newblock Inference and monitoring convergence.
\newblock In {\em Markov chain Monte Carlo in practice}, pages 131--143.
  Springer, 1996.

\bibitem{Ghosal_issues}
S.~Ghosal, J.~K. Ghosh, and R.~V. Ramamoorthi.
\newblock Consistency issues in {B}ayesian nonparametrics.
\newblock {\em Statistics Textbooks and Monographs}, 158:639--668, 1999.

\bibitem{Gibbs04}
A.~L. Gibbs.
\newblock Convergence in the {W}asserstein metric for {M}arkov chain {M}onte
  {C}arlo algorithms with applications to image restoration.
\newblock {\em Stoch. Models}, 20(4):473--492, 2004.

\bibitem{Gibbs_Su}
A.~L. Gibbs and F.~E. Su.
\newblock On choosing and bounding probability metrics.
\newblock {\em International Statistical Review}, 70(3):419--435, 2002.

\bibitem{Gustafson_local}
P.~Gustafson and L.~Wasserman.
\newblock Local sensitivity diagnostics for {B}ayesian inference.
\newblock {\em The Annals of Statistics}, 23(6):2153--2167, 1995.

\bibitem{Hable}
R.~Hable and A.~Christmann.
\newblock On qualitative robustness of support vector machines.
\newblock {\em Journal of Multivariate Analysis}, 102(6):993--1007, 2011.

\bibitem{Hable_robustness}
R.~Hable and A.~Christmann.
\newblock Robustness versus consistency in ill-posed classification and
  regression problems.
\newblock In {\em Classification and Data Mining}, pages 27--35. Springer,
  2013.

\bibitem{Hampel}
F.~R. Hampel.
\newblock A general qualitative definition of robustness.
\newblock {\em The Annals of Mathematical Statistics}, pages 1887--1896, 1971.

\bibitem{HuberRonchetti:2009}
P.~J. Huber and E.~M. Ronchetti.
\newblock {\em Robust {S}tatistics}.
\newblock Wiley Series in Probability and Statistics. John Wiley \& Sons Inc.,
  Hoboken, NJ, second edition, 2009.

\bibitem{KolmogorovTikhomirov}
A.~N. Kolmogorov and V.~M. Tikhomirov.
\newblock $\varepsilon$-entropy and $\varepsilon$-capacity of sets in function
  spaces.
\newblock {\em Trans. Amer. Math. Soc}, 17:277--364, 1961.

\bibitem{Madras_quantitative}
N.~Madras and D.~Sezer.
\newblock Quantitative bounds for {M}arkov chain convergence: {W}asserstein and
  total variation distances.
\newblock {\em Bernoulli}, 16(3):882--908, 2010.

\bibitem{Mizera}
I.~Mizera.
\newblock Qualitative robustness and weak continuity: the extreme unction.
\newblock {\em Nonparametrics and Robustness in Modern Statistical Inference
  and Time Series Analysis: A Festschrift in honor of Professor Jana
  Jureckov{\'a}}, 1:169, 2010.

\bibitem{Nasser}
M.~Nasser, N.~A. Hamzah, and Md.~A. Alam.
\newblock Qualitative robustness in estimation.
\newblock {\em Pakistan Journal of Statistics and Operation Research},
  8(3):619--634, 2012.

\bibitem{OwhadiScovel:2013}
H.~Owhadi and C.~Scovel.
\newblock Brittleness of {B}ayesian inference and new {S}elberg formulas.
\newblock {\em Commun. Math. Sci.}, 13(75), 2013.

\bibitem{OSS:2013}
H.~Owhadi, C.~Scovel, and T.~J. Sullivan.
\newblock Brittleness of {B}ayesian inference under finite information in a
  continuous world.
\newblock {\em Electron. J. Statist.}, 9:1--79, 2015.

\bibitem{owhadi_shatters}
H.~Owhadi, C.~Scovel, and T.~J. Sullivan.
\newblock On the brittleness of {B}ayesian inference.
\newblock {\em SIAM Review}, 57(4):566--582, 2015.

\bibitem{Rachev:etal}
S.~T. Rachev, L.~B. Klebakov, S.~V. Stoyanov, and F.~J. Fabozzi.
\newblock {\em The Methods of Distances in the Theory of Probability and
  Statistics}.
\newblock Springer, New York, 2013.

\bibitem{Roberts_mcmc}
G.~O. Roberts and J.~S. Rosenthal.
\newblock Markov-chain {M}onte {C}arlo: Some practical implications of
  theoretical results.
\newblock {\em Canadian Journal of Statistics}, 26(1):5--20, 1998.

\bibitem{Roberts2004}
G.~O. Roberts and J.~S. Rosenthal.
\newblock General state space {M}arkov chains and {MCMC} algorithms.
\newblock {\em Probability Surveys}, 1:20--71, 2004.

\bibitem{Schervish_theory}
M.~J. Schervish.
\newblock {\em Theory of {S}tatistics}.
\newblock Springer, 1995.

\bibitem{Schwartz:1965}
L.~Schwartz.
\newblock On {B}ayes procedures.
\newblock {\em Z. Wahrscheinlichkeitstheorie und Verw. Gebiete}, 4:10--26,
  1965.

\bibitem{Stuart:2010}
A.~M. Stuart.
\newblock Inverse problems: a {B}ayesian perspective.
\newblock {\em Acta Numer.}, 19:451--559, 2010.

\bibitem{Wald:1950}
A.~Wald.
\newblock {\em Statistical {D}ecision {F}unctions}.
\newblock John Wiley \& Sons Inc., New York, NY, 1950.

\bibitem{Wasserman_asymptotic}
L.~Wasserman.
\newblock Asymptotic properties of nonparametric {B}ayesian procedures.
\newblock In {\em Practical nonparametric and semiparametric {B}ayesian
  statistics}, pages 293--304. Springer, 1998.

\bibitem{Woodbury00}
A.~D. Woodbury and T.~J. Ulrych.
\newblock A full-bayesian approach to the groundwater inverse problem for
  steady state flow.
\newblock {\em Water Resources Research}, 36(8):2081--2093, 2000.

\end{thebibliography}

\end{document}